\definecolor{EqColor}{HTML}{5E60CE}   
\definecolor{CiteColor}{HTML}{0A7AA1} 
\let\oldeqref\eqref
\renewcommand{\eqref}[1]{\textcolor{EqColor}{\oldeqref{#1}}}
\let\oldref\ref
\renewcommand{\ref}[1]{\textcolor{EqColor}{\oldref{#1}}}
\let\orig@cite\cite
\renewcommand{\cite}{\@ifnextchar[{\my@cite@opt}{\my@cite@noopt}}
\newcommand{\my@cite@opt}[2][]{\textcolor{CiteColor}{\orig@cite[#1]{#2}}}
\newcommand{\my@cite@noopt}[1]{\textcolor{CiteColor}{\orig@cite{#1}}}
\newtheorem{theorem}{Theorem}[section]
\newtheorem{proposition}[theorem]{Proposition}
\newtheorem{lemma}[theorem]{Lemma}
\newtheorem{definition}[theorem]{Definition}
\newtheorem{corollary}[theorem]{Corollary}
\newtheorem{example}[theorem]{Example}
\newtheorem{remark}[theorem]{Remark}
\title{A novel approach through spherical functions in the characterization of invariant functions
}
\author{Rocío Díaz Martín and Linda Saal
}
\date{}
\begin{document}

\maketitle

\renewcommand{\thefootnote}{}
\footnotetext{%
  \begin{minipage}{\textwidth}
  \raggedright
  Keywords: Fourier transform, Gelfand pairs, invariant polynomials, spherical functions.\\
  2020 Mathematics Subject Classification: 43A85, 43A90, 22E30.
  \end{minipage}
}
\renewcommand{\thefootnote}{\arabic{footnote}}

\begin{abstract}
    \noindent Given a compact subgroup $K$ of the orthogonal group acting on the Euclidean space $\mathbb{R}^n$, Gerald Schwarz proved that every smooth $K$-invariant function on $\mathbb{R}^n$ can be expressed as a smooth function of a generating set of $K$-invariant polynomials on $n$ variables.
    The goal of this work is to provide an alternative and more straightforward proof of this result, based on Gelfand theory, with a particular focus on spherical functions.
\end{abstract}

\section{Introduction}

\subsection*{Schwarz's Theorem}
Given any closed subgroup of rotations and reflections acting naturally on the Euclidean domain $\mathbb{R}^n$, if one considers all the smooth functions  invariant under its action, it holds that they can be characterized as smooth functions on the generators of the algebra of invariant polynomials on $\mathbb{R}^n$.
Precisely, let $\textcolor{blue}{K}$ be a compact group acting orthogonally on $\mathbb R^n$.
From the classical theorem of Hilbert
(see \cite{Weyl}), it is known that the algebra of $K$-invariant polynomials on $\mathbb{R}^n$, denoted as $\textcolor{blue}{\mathcal{P}(\mathbb R^n)^K}$, is finitely generated. In 1975, Gerald Schwarz proved that all smooth $K$-invariant functions on $\mathbb{R}^n$,  $\textcolor{blue}{C^{\infty}\left(\mathbb{R}^n\right)^K}$, are characterized as smooth functions on an arbitrary system $\textcolor{blue}{\left\{\rho_1, \ldots, \rho_\ell\right\}}$ of  generators: 

\begin{theorem}\label{thm: schwarz} \cite[Thm 1]{Schwarz}
    Let $K$ be a compact group acting orthogonally on $\mathbb{R}^n$, and let $\left\{\rho_1, \ldots, \rho_\ell\right\}$ be a set of generators of the algebra $\mathcal{P}(\mathbb{R}^n)^K$. 
    Then, for every infinitely differentiable $K$-invariant function $f \in C^{\infty}\left(\mathbb{R}^n\right)^K$, there exists $h \in C^{\infty}\left(\mathbb{R}^\ell\right)$ such that $f(x) = h\left(\rho_1(x), \ldots, \rho_\ell(x)\right)$.
\end{theorem}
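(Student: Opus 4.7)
The plan is to combine Euclidean Fourier inversion with the $K$-spherical functions attached to the pair $(K\ltimes \mathbb{R}^n, K)$, and to reduce the whole theorem to Hilbert's theorem on the finite generation of $\mathcal{P}(\mathbb{R}^n)^K$.

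First, by multiplying $f$ with a $K$-invariant smooth cutoff (obtained by averaging a standard bump function over $K$) and patching via a $K$-invariant partition of unity, I would reduce to the case $f\in\mathcal{S}(\mathbb{R}^n)^K$. Fourier inversion, after averaging the exponential kernel over $K$ and using the $K$-invariance of $\widehat f$, then gives
\[
f(x) \;=\; (2\pi)^{-n}\int_{\mathbb{R}^n} \widehat{f}(\xi)\,\phi_\xi(x)\,d\xi,\qquad \phi_\xi(x):= \int_K e^{i\langle k\xi,\,x\rangle}\,dk,
\]
with $\phi_\xi$ the bounded $K$-spherical function on $\mathbb{R}^n$.

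The heart of the argument is to show that each spherical function admits the form $\phi_\xi(x)=\Phi(\rho_1(x),\ldots,\rho_\ell(x),\xi)$ for some $\Phi\in C^\infty(\mathbb{R}^\ell\times\mathbb{R}^n)$. Expanding the exponential,
\[
\phi_\xi(x) \;=\; \sum_{m\ge 0}\frac{i^m}{m!}\,P_m(x,\xi),\qquad P_m(x,\xi):=\int_K\langle k\xi, x\rangle^m\,dk,
\]
so each $P_m(\cdot,\xi)$ is a $K$-invariant polynomial in $x$ and, by Hilbert, equals $Q_m(\rho_1(x),\ldots,\rho_\ell(x),\xi)$ for a polynomial $Q_m$ on $\mathbb{R}^\ell$. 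Choosing these lifts coherently and using the uniform bound $|\phi_\xi(x)|\le 1$ together with Cauchy-type estimates on the Taylor coefficients, I would establish that
\[
\Phi(u,\xi) \;:=\; \sum_{m\ge 0}\frac{i^m}{m!}\,Q_m(u,\xi)
\]
defines a smooth function on $\mathbb{R}^\ell\times\mathbb{R}^n$. Substituting back and differentiating under the integral (justified by the Schwartz decay of $\widehat f$) yields
\[
f(x) \;=\; h\bigl(\rho_1(x),\ldots,\rho_\ell(x)\bigr),\qquad h(u):=(2\pi)^{-n}\!\int \widehat f(\xi)\,\Phi(u,\xi)\,d\xi\;\in\; C^\infty(\mathbb{R}^\ell).
\]

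The main obstacle will be this second step: producing a \emph{smooth} lift $\Phi$ on the whole of $\mathbb{R}^\ell$ (not merely on the semi-algebraic image $\rho(\mathbb{R}^n)$) together with quantitative control over the $Q_m$. Hilbert's theorem guarantees the \emph{existence} of each $Q_m$ but is not quantitative and supplies no canonical choice, so arranging the lifts $Q_m$ so that the formal series $\Phi$ both converges and has all derivatives controllable uniformly in $x$ is delicate. It is precisely here that the spherical-function viewpoint—bounded $K$-averages of characters, carrying the analytic and algebraic structure of the commutative convolution algebra $L^1(\mathbb{R}^n)^K$—is expected to furnish the estimates needed to close the argument.
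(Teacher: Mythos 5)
Your overall strategy --- Fourier inversion against the spherical functions $\phi_\xi$, followed by expanding $\phi_\xi$ in the generators $\rho_1,\dots,\rho_\ell$ and pushing the expansion through the $\xi$-integral --- is exactly the strategy of the paper. The paper organizes the expansion differently: it uses that $\phi_\xi$ is a joint eigenfunction of the invariant constant-coefficient operators $D_j=\partial_{\rho_j}$ with eigenvalues $i^{\deg\rho_j}\rho_j(\xi)$, and via Helgason's symmetrization argument shows that the coefficients of $\phi_\xi$ in the monomials $\rho(x)^J$ are polynomials $q_J(\rho(\xi))$. It then settles the convergence of the lifted series $h_\xi(t)=\sum_J q_J(\rho(\xi))\,t^J$ on all of $\mathbb{R}^\ell$ by an elementary but decisive observation (Lemma \ref{lem: pol}): for any $r>0$ there is an $x_0$ with $|\rho_j(x_0)|\ge r$ for every $j$, so on the ball $\|t\|\le r$ the series for $h_\xi(t)$ is dominated termwise by the absolutely convergent series of $\phi_\xi$ at $x_0$. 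That is precisely the ``coherent choice with quantitative control'' you correctly flag as the missing ingredient in your second step, and it yields that $t\mapsto h_\xi(t)$ is real analytic on all of $\mathbb{R}^\ell$.

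There is, however, a genuine gap in your final step, and it is not one that better Cauchy estimates can close. You claim that Schwartz decay of $\widehat f$ suffices to make $h(u)=(2\pi)^{-n}\int\widehat f(\xi)\,\Phi(u,\xi)\,d\xi$ converge and to justify differentiation under the integral. This fails for $u$ outside the image $\rho(\mathbb{R}^n)$: there $|\Phi(u,\xi)|$ is no longer bounded by $1$ and in general grows exponentially in $\|\xi\|$. Already for $K=\mathrm{SO}(n)$, $\ell=1$, $\rho(x)=\|x\|^2$, one has $\Phi(t,\xi)=\sum_k \frac{\Gamma(n/2)}{k!\,\Gamma(k+n/2)}\bigl(\tfrac{i\|\xi\|}{2}\bigr)^{2k}t^k$; for $t<0$ every term is positive and the sum grows like $e^{\|\xi\|\sqrt{|t|}}$, which no Schwartz weight $\widehat f$ can absorb, so your integral diverges for all $t<0$. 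This is exactly why the paper does not prove Theorem \ref{thm: schwarz} in full generality: its Theorem \ref{thm: schwarz2} assumes $\widehat f$ has \emph{compact} support, so that the $\xi$-integration runs over a compact set on which $\xi\mapsto h_\xi(t)$ is continuous and hence bounded, and analyticity of $h$ then follows by Morera plus Fubini. Your reduction to $\mathcal{S}(\mathbb{R}^n)^K$ therefore does not land in a case your method can handle; you would need band-limited $f$ (impossible for compactly supported $f\ne 0$) or a genuinely different way to control $\Phi$ off $\rho(\mathbb{R}^n)$. A secondary, fixable point: in the partition-of-unity step the local solutions $h_j$ produced by the integral formula are not compactly supported in $t$, so you must additionally cut each $h_j$ off near $\rho(\mathrm{supp}\,\chi_j)$ and invoke properness of $\rho$ to make $\sum_j h_j$ locally finite on $\mathbb{R}^\ell$.
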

This result is very strong because it relates the differentiable structures of two spaces that in principle were only homeomorphic. Indeed, let
$\rho:\mathbb{R}^n\to\mathbb{R}^\ell$ given by
$$\textcolor{blue}{\rho(x)}:=(\rho_1(x),\dots,\rho_\ell(x)),$$ for every $x\in\mathbb{R}^n$. This map induces a homeomorphism between the space of orbits $\mathbb{R}^n/K$ and $\rho(\mathbb{R}^n)$:
$$\xymatrix{
\mathbb{R}^n \ar[rd]^{\rho} \ar[d]_{\text{quotient projection}}\\
\mathbb{R}^n/K \ar@{-->}[r] & \rho(\mathbb{R}^n) . }$$
As explained in \cite{Schwarz}, $\mathbb{R}^n/K$ can be given a smooth
structure by setting that a function on the quotient space $\mathbb{R}^n/K$ is smooth if when lifted to a $K$-invariant function on $\mathbb{R}^n$ it is smooth in the classical sense. At the same time,  the image set $\rho(\mathbb{R}^n)$, viewed as a closed subset of  $\mathbb{R}^\ell$, has a smooth structure by defining  that a function on $\rho(\mathbb{R}^n)\subset \mathbb R^\ell$ is smooth if it is the restriction to $\rho(\mathbb{R}^n)$ of a smooth function on $\mathbb{R}^\ell$. Thus, Theorem \ref{thm: schwarz} states that $\rho$ induces a homeomorphism of 
$\mathbb{R}^n/K$ and $\rho(\mathbb{R}^n)$ together
with their smooth structures. That is, by using \textit{pull-back} notation, Theorem \ref{thm: schwarz} states that the map $\rho^*$ below is onto 
\begin{gather*}
\rho^*:C^\infty(\mathbb R^\ell)\to C^\infty (\mathbb R^n)^K\\
\textcolor{blue}{ (\rho^*h)(x)}:=h(\rho_1(x),\dots,\rho_\ell(x)).
\end{gather*}

Theorem \ref{thm: schwarz} was first conjectured and  shown for some particular cases. For example, by using Taylor expansions and analytic extensions,  H. Whitney showed in 1943 that even smooth functions $f$ on $\mathbb R$ are of form $f(x)=h(x^2)$ for $h:\mathbb R\to \mathbb R$ smooth  \cite{Whitney}, that is, Theorem \ref{thm: schwarz} for $n=1$ and $K=\{\pm 1\}$ (see also \cite{fefferman} and \cite{Whitney_extension}). G. Glaeser, in 1963, extended 
Theorem \ref{thm: schwarz} for the case of the symmetric group $\mathbf{S}_n$ acting on $\mathbb R^n$ \cite{Glaeser}. 
Before the paper \cite{Schwarz} appeared with the general proof, Theorem \ref{thm: schwarz} was deduced for the case of finite groups \cite{Bierstone}.
\\

Finally, we observe that the image set $\rho^*(C^\infty(\mathbb R^\ell))$ is dense in $C^\infty(\mathbb R^n)^K$, as a consequence of the following basic facts:
\begin{enumerate}
    \item The space of invariant polynomials $\mathcal{P}(\mathbb R^n)^K$ is dense in $C^\infty(\mathbb R^n)^K$.\footnote{Fact 1 follows, on the one hand, from the density of polynomials in $C^\infty(\mathbb{R}^n)$ with respect to the Whitney topology (i.e., uniform convergence of the function and its derivatives on compact sets).
On the other hand, because the projection
$
C^\infty(\mathbb R^n)\ni f \longmapsto \Big(x \mapsto \int_K f(k\cdot x)\, dk\Big)\in C^\infty(\mathbb R^n)^K
$
is linear, continuous, and onto, and hence open by the Open Mapping Theorem. 
}
    \item At the polynomial level, it holds the identity
    $
    \mathcal{P}(\mathbb R^n)^K = \rho^*(\mathcal{P}(\mathbb R^\ell)).
    $\footnote{For the 2nd fact, the inclusion $\rho^*(\mathcal{P}(\mathbb R^\ell)) \subseteq \mathcal{P}(\mathbb R^n)^K$ is immediate since the polynomials $\rho_1,\dots,\rho_\ell$ are $K$-invariant. 
Conversely, given $p \in \mathcal{P}(\mathbb R^n)^K$, we can write it in terms of the generators $\rho_1,\dots,\rho_\ell$ as a finite sum $
p(x)=\sum a_{j,k}(\rho_j(x))^k$,
for some coefficients $a_{j,k}$, and defining $h(y_1,\dots,y_\ell) = \sum a_{j,k} \, (y_j)^k$ in $\mathbb R^\ell$, we obtain $
p(x)=h(\rho_1(x),\dots,\rho_\ell(x))$, which implies $\rho^*(\mathcal{P}(\mathbb R^\ell)) \supseteq \mathcal{P}(\mathbb R^n)^K$.}
\end{enumerate}
This suggests that a possible strategy to prove Theorem \ref{thm: schwarz} would be to establish the remaining step, namely that $\rho^*(C^\infty(\mathbb R^\ell))$ is closed in $C^\infty(\mathbb R^n)^K$. 
For instance, if the generators $\rho_1,\dots,\rho_\ell$ are algebraically independent, then $n \geq \ell$ (roughly speaking, one cannot have more generators than variables), and by a result of G. Glaeser \cite{Glaeser} it follows that $\rho^*(C^\infty(\mathbb R^\ell))$ is closed in $C^\infty(\mathbb R^n)^K$. 
In this work, however, we will adopt a different approach.

\subsection*{Our Contributions}

Our main objective is to provide an alternative --possibly more elementary-- proof of Theorem \ref{thm: schwarz} employing \textit{Gelfand theory}\footnote{The original proof by Gerald Schwarz relies on \textit{Grothendieck's theory of topological tensor products and nuclear spaces} \cite{gro,sch,treves}.}. We will leverage the fact that, for any compact subgroup $K$ of the orthogonal group $ \mathrm{O}(n)$ one has that
$\textcolor{blue}{(K\ltimes\mathbb{R}^n, K)}$ (in short, $\textcolor{blue}{(K,\mathbb R^n)}$) forms a \textit{Gelfand pair}, 
where $K \ltimes \mathbb{R}^n$ denotes the semidirect product of $K$ and $\mathbb{R}^n$ (known as the $n$-dimensional connected Euclidean motion group when $K=\mathrm{SO}(n)$, i.e., the group of isometries of $\mathbb{R}^n$). 
Specifically, by employing \textit{spherical functions}, we aim to explicitly determine, for a given $K$-invariant function $f$, a corresponding function $h$ in Theorem \ref{thm: schwarz} satisfying $h(\rho(x))=f(x)$. 

Our main contribution is the proof of a version of Theorem \ref{thm: schwarz} for integrable $K$-invariant functions $f:\mathbb R^n \to\mathbb C$ whose Fourier transform has compact support  (see Theorem \ref{thm: schwarz2} below). Such functions $f$ are, in particular, not only smooth but in fact real analytic (by the Paley–Wiener theorem). As a counterpart, we establish stronger regularity for the associated function $h:\mathbb R^\ell\to \mathbb C$ than in Theorem \ref{thm: schwarz}, namely, we show that $h$ may be chosen to be real analytic as well.

In addition, the problem addressed by Gerald Schwarz in \cite{Schwarz} can be reformulated, within the framework of Gelfand theory, as an \textit{extension problem} for the Gelfand transform  (see Corollary \ref{eq: nuestro thm} and the description below). The precise definitions and underlying concepts will be introduced and motivated in Section \ref{sec: prelim}. The purpose of the upcoming discussion is simply to provide preliminary connections between this theory and Theorem \ref{thm: schwarz}.

\paragraph{Schwarz's Theorem and Gelfand Theory:} Given $f:\mathbb{R}^n\to \mathbb C$ an integrable $K$-invariant function, we write $f\in \textcolor{blue}{L^1(\mathbb R^n)^K}$. That is, if $\textcolor{blue}{k\cdot x}$ denotes the natural action of $k\in K$ on $x\in \mathbb R^n$, then $f(k\cdot x)=f(x)$ for all $k\in K$, $x\in \mathbb R^n$. We recall that such natural action $k\cdot x$ can be represented as a matrix-vector multiplication as $K$ is a subgroup of the group of orthogonal $n\times n$ matrices. 
The (classical) Fourier transform of such function $f$ is well-defined  as  
\begin{equation}\label{eq: class FT}
 \textcolor{blue}{ \widehat{f}(\xi)}:=\int_{\mathbb{R}^n} f(x) \,  e^{-i\langle x,\xi\rangle} \, dx.  
\end{equation}
Since the convolution algebra $L^1(\mathbb R^n)^K$ is always commutative (regardless of the choice of $K$), we have a so-called \textit{Gelfand pair} $(K,\mathbb R^n)$. Then, the Gelfand theory provides a well-defined framework for the notion of the \textit{spectrum} of the algebra $L^1(\mathbb R^n)^K$, along with the so-called 
\textit{Gelfand transform}. Indeed, the Gelfand transform of $f$ is defined as 
\begin{equation}\label{eq: Gelf t}
  \textcolor{blue}{\mathcal{F}({f})(\varphi_\xi)}:=\int_{\mathbb{R}^n} f(x) \, \varphi_\xi(-x) \, dx, \qquad \text{ where } \quad  \textcolor{blue}{\varphi_\xi(x)}:= \int_{K} e^{i\langle x,k\cdot\xi\rangle} \, dk,  
\end{equation}
where $dk$ denotes the normalized Haar measure on $K$. The set of (bounded) \textit{spherical functions} $\{\varphi_\xi\}$ determines the spectrum \textcolor{blue}{$\Sigma$}  of the algebra $L^1(\mathbb R^n)^K$. 
As $f$ is $K$-invariant, its Fourier transform $\widehat{f}$ is also a $K$-invariant function, and coincides with its Gelfand transform, that is, 
\begin{equation}\label{eq: geltansd and fourier}
    \mathcal{F}(f)(\varphi_\xi)=\widehat{f}(\xi)
\end{equation}
(think of integrating \eqref{eq: class FT} over $K$ to obtain \eqref{eq: Gelf t}).
Moreover, from the seminal paper \cite{ferrari_ruffino}, the domain $\Sigma$ of the Gelfand transform can be identified with a closed subset \textcolor{blue}{$\Lambda$} of \( \mathbb{C}^\ell \). This result holds in great generality. In particular, we will show that \( \Lambda \) can be identified with the image of \( \mathcal{\rho} \) in \( \mathbb{R}^\ell \), allowing us to write
\begin{equation}\label{eq: f transf en rho}
\mathcal{F}(f)(\varphi_\xi) = \mathcal{F}(f)(\rho(\xi)).
\end{equation}
Using this notation within the Harmonic Analysis framework, in this work we will prove the following proposition and the subsequent version of Theorem \ref{thm: schwarz}:

\begin{proposition}\label{prop: aux sph f} Let $(K,\mathbb R^n)$ be a Gelfand pair,  let $\varphi _{\xi }$ be an associated bounded  spherical function, and let $\left\{\rho_1, \ldots, \rho_\ell\right\}$ be a set of generators of the algebra $\mathcal{P}(\mathbb{R}^n)^K$. Then, there exists a  real analytic function  $h_\xi$ on $\mathbb R^\ell$ such that   $\varphi_\xi(x)=h_\xi(\rho_1(x), \ldots, \rho_\ell(x))$ for every $x\in \mathbb R^n$. 
\end{proposition}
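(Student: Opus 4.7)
My plan is to expand $\varphi_\xi$ as a Taylor series around the origin whose coefficients are $K$-invariant polynomials, use Hilbert's theorem (Fact~2 of the introduction) to rewrite those coefficients through the generators $\rho_j$, and then verify that the resulting formal power series in $y\in\mathbb{R}^\ell$ defines a real analytic function.

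For the first step, the formula $\varphi_\xi(z)=\int_K e^{i\langle z,k\cdot\xi\rangle}\,dk$ extends $\varphi_\xi$ to an entire function on $\mathbb{C}^n$ of exponential type (bounded by $e^{|z|\,|\xi|}$, via differentiation under the integral), and termwise integration of the Taylor expansion of the exponential yields
\begin{equation*}
\varphi_\xi(x)=\sum_{m=0}^{\infty}\frac{i^m}{m!}\,P_m(x),\qquad P_m(x):=\int_K\langle x,k\cdot\xi\rangle^m\,dk,
\end{equation*}
with absolute and uniform convergence on compacts. Each $P_m$ is homogeneous of degree $m$, $K$-invariant in $x$ (via the change of variables $k\mapsto k_0^{-1}k$ in the Haar integral), and satisfies $|P_m(x)|\le(|x|\,|\xi|)^m$. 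By passing to the homogeneous components of the $\rho_j$ (and later composing back with the polynomial expressions for the original generators) I may assume $\rho_1,\dots,\rho_\ell$ are homogeneous of positive degrees $d_j$; matching homogeneity in Fact~2 then produces polynomials $Q_m$ on $\mathbb{R}^\ell$, of weighted degree $m$, with $P_m=Q_m\circ\rho$, so that the natural candidate
\begin{equation*}
h_\xi(y):=\sum_{m=0}^{\infty}\frac{i^m}{m!}\,Q_m(y)
\end{equation*}
formally satisfies $h_\xi(\rho(x))=\varphi_\xi(x)$ on $\mathbb{R}^n$.

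The main obstacle is to show this formal series converges and defines a real analytic function on \emph{all} of $\mathbb{R}^\ell$, since the $Q_m$ are not uniquely determined when the $\rho_j$ satisfy algebraic relations and a careless choice could blow up off the image $\rho(\mathbb{R}^n)$. My plan is to pass to the complexification: the identity theorem upgrades the $K$-invariance of the entire extension $\widetilde\varphi_\xi$ to invariance under the complexification $K_{\mathbb{C}}\subset\mathrm{O}(n,\mathbb{C})$, so $\widetilde\varphi_\xi$ descends to a holomorphic function on the affine quotient $\mathbb{C}^n/\!/K_{\mathbb{C}}$, which Hilbert's theorem identifies via $\rho_{\mathbb{C}}$ with the closed algebraic subvariety $V:=\overline{\rho_{\mathbb{C}}(\mathbb{C}^n)}\subset\mathbb{C}^\ell$. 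Since $\mathbb{C}^\ell$ is a Stein manifold, Cartan's extension theorem (a consequence of Theorem~B) supplies an entire extension $H_\xi\colon\mathbb{C}^\ell\to\mathbb{C}$, and $h_\xi:=H_\xi\big|_{\mathbb{R}^\ell}$ is the required real analytic function; matching weighted-homogeneous Taylor coefficients at the origin then recovers the explicit representation above. The delicate point in this route is to verify that the descent to $V$ is holomorphic, rather than merely continuous on orbit-space, which relies on the reductivity of $K_{\mathbb{C}}$ and the good-quotient property of $\rho_{\mathbb{C}}$ --- for instance, via a Reynolds-operator argument, or by uniformly approximating $\widetilde\varphi_\xi$ on compacts by $K_{\mathbb{C}}$-invariant polynomials and extending by continuity.
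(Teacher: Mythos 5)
Your first two steps coincide with the paper's: expand $\varphi_\xi$ into $K$-invariant homogeneous components (your $P_m(x)=\int_K\langle x,k\cdot\xi\rangle^m\,dk$ is exactly the paper's $p_{m,\xi}$ up to the factor $i^m/m!$), rewrite each component through the generators, and take the resulting power series in $t\in\mathbb R^\ell$ as the candidate $h_\xi$; this is precisely Definition \ref{def: h_xi}. You also correctly identify the crux: nothing so far guarantees that this series converges off the image of $\rho$. But at exactly that point your argument has a genuine gap. The route you propose --- complexify, descend $\widetilde\varphi_\xi$ to the affine quotient $V=\mathbb C^n/\!/K_{\mathbb C}\subset\mathbb C^\ell$, and extend by Cartan's Theorem B --- hinges on the claim that a $K_{\mathbb C}$-invariant holomorphic function induces a \emph{holomorphic} (not merely continuous) section of $\mathcal O_V$. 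That claim is not a formality: it is the holomorphic analogue of Schwarz's theorem (a theorem of Luna), i.e., a result of essentially the same depth as the one being proved, and you explicitly leave it unverified. Neither of your suggested fixes closes it as stated: a Reynolds-operator average produces invariant polynomials but says nothing about holomorphy of the descended function on the (generally singular) variety $V$; and the approximation argument needs both that compact subsets of $V$ lift to compact subsets of $\mathbb C^n$ (a Kempf--Ness-type properness statement) and that locally uniform limits of holomorphic functions on the singular space $V$ are again sections of $\mathcal O_V$ (which uses normality of $V$ and a Riemann-type extension theorem). Moreover, even granting an entire extension $H_\xi$ from Theorem B, it is non-canonical, so ``matching weighted-homogeneous Taylor coefficients'' does not recover your explicit series: the $Q_m$ are not unique when the $\rho_j$ satisfy relations.

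The paper closes this same gap with an elementary device, Lemma \ref{lem: pol}: since the $\rho_j$ may be taken homogeneous, for any $r>0$ there is a single point $x_0\in\mathbb R^n$ with $|\rho_j(x_0)|\ge r$ for every $j$; hence for $\|t\|\le r$ the series $\sum_J|q_J(\rho(\xi))|\,|t_1|^{j_1}\cdots|t_\ell|^{j_\ell}$ is dominated term by term by the convergent series $\sum_J|q_J(\rho(\xi))|\,|\rho_1(x_0)|^{j_1}\cdots|\rho_\ell(x_0)|^{j_\ell}$ obtained by evaluating \eqref{eq: series varpi_xi} at $x_0$. This gives absolute and uniform convergence on every ball of $\mathbb R^\ell$, hence real analyticity of $h_\xi$ everywhere, with no complex-geometric input. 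To repair your proposal you must either actually prove the holomorphic descent to $V$, or replace that entire block by a domination argument of this kind.
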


\begin{theorem}\label{thm: schwarz2}
    Let $K$ be a compact group acting orthogonally on $\mathbb{R}^n$, and let $\left\{\rho_1, \ldots, \rho_\ell\right\}$ be a set of generators of the algebra $\mathcal{P}(\mathbb{R}^n)^K$. 
    Then, for every $K$-invariant function $f\in L^1(\mathbb R^n )$  with $\widehat{f}$ of compact support, there exists a real analytic 
    function $h:\mathbb R^\ell\to \mathbb C $ such that $f(x) = h\left(\rho_1(x), \ldots, \rho_\ell(x)\right)$. 
\end{theorem}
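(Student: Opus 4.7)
My plan is to combine Fourier inversion with Proposition~\ref{prop: aux sph f}: express $f$ as an integral of spherical functions $\varphi_\xi$ weighted by $\widehat{f}$, substitute $\varphi_\xi(x)=h_\xi(\rho(x))$ from the proposition, and then define $h$ as the corresponding integral of the $h_\xi$'s on $\mathbb R^\ell$.

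First, since $f\in L^1(\mathbb R^n)$ its Fourier transform $\widehat{f}$ is continuous, and being compactly supported it belongs to $L^1(\mathbb R^n)$ as well, so Fourier inversion applies. Using the $K$-invariance of $f$, I would average the inversion formula over the compact group $K$ and apply Fubini (justified because the integrand is bounded in absolute value by $|\widehat{f}(\xi)|$, which is integrable, and $K$ has finite Haar measure). Combined with the orthogonality of the $K$-action on the inner product, this yields the spherical expansion
$$ f(x)\;=\;(2\pi)^{-n}\int_{\operatorname{supp}\widehat{f}} \widehat{f}(\xi)\,\varphi_\xi(x)\,d\xi. $$
Then, invoking Proposition~\ref{prop: aux sph f}, I substitute $\varphi_\xi(x)=h_\xi(\rho(x))$ and set
$$ h(y)\;:=\;(2\pi)^{-n}\int_{\operatorname{supp}\widehat{f}} \widehat{f}(\xi)\,h_\xi(y)\,d\xi,\qquad y\in \mathbb R^\ell, $$
so that the identity $f(x)=h(\rho(x))$ holds by construction.

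The main obstacle is proving that $h$ is real analytic on $\mathbb R^\ell$. This requires more than the pointwise $y$-analyticity of each individual $h_\xi$: I need joint control of $h_\xi(y)$ in both $y$ and $\xi$. The natural route is to revisit the construction underlying Proposition~\ref{prop: aux sph f}. Expanding
$$ \varphi_\xi(x)\;=\;\sum_{m\geq 0}\frac{i^m}{m!}\int_K \langle x,k\cdot\xi\rangle^m\,dk, $$
each summand is a $K$-invariant polynomial in $x$ whose coefficients are themselves polynomials in $\xi$; by fact~2 of the introduction, it can be written as $q_m(\rho(x),\xi)$ for some $q_m$ polynomial jointly in $(y,\xi)$. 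The delicate task is then to estimate $|q_m(y,\xi)|$ finely enough that the lifted series $\sum_{m}\tfrac{i^m}{m!}q_m(y,\xi)$ converges and extends holomorphically to a neighborhood of $\mathbb R^\ell\times\mathbb R^n$ in $\mathbb C^\ell\times\mathbb C^n$. Granted such a joint extension, compactness of $\operatorname{supp}\widehat{f}$ together with the boundedness of $\widehat{f}$ permits differentiation under the integral sign to all orders, and identifies the Taylor series of $h$ at any point of $\mathbb R^\ell$ with a series that converges on a full neighborhood, yielding the real analyticity of $h$.
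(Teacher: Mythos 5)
Your skeleton coincides with the paper's: Fourier inversion plus averaging over $K$ gives $f(x)=\int \widehat{f}(\xi)\,\varphi_\xi(x)\,d\xi$, Proposition \ref{prop: aux sph f} lets you replace $\varphi_\xi(x)$ by $h_\xi(\rho(x))$, and the function $h(t)=\int_{\operatorname{supp}\widehat{f}}\widehat{f}(\xi)\,h_\xi(t)\,d\xi$ is exactly the paper's Definition \ref{def: h}. Up to and including the identity $f(x)=h(\rho(x))$, your argument is the paper's argument.

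The gap is in the real analyticity of $h$, which is the actual content of the theorem beyond Proposition \ref{prop: aux sph f}. You correctly identify that pointwise $y$-analyticity of each $h_\xi$ is insufficient, but the step you propose --- estimating $|q_m(y,\xi)|$ well enough to get a joint holomorphic extension of $(y,\xi)\mapsto h_\xi(y)$ to a neighborhood in $\mathbb C^\ell\times\mathbb C^n$ --- is precisely the hard part, and you leave it entirely conditional (``granted such a joint extension''). These estimates are not routine: the coefficients $q_J$ in \eqref{eq: h_xi} arise from inverting the linear relations between the monomial basis and the (generally algebraically dependent) products $\varrho^J$, so uniform control in $(y,\xi)$ does not follow from the crude expansion $\varphi_\xi(x)=\sum_m \frac{i^m}{m!}\int_K\langle x,k\cdot\xi\rangle^m\,dk$ without further work. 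The paper sidesteps this entirely with a softer argument: each $h_\xi$ is entire on $\mathbb C^\ell$ because its power series converges on every ball (via Lemma \ref{lem: pol} and the absolute convergence of the series for $\varphi_\xi$); the map $\xi\mapsto h_\xi(t)$ is continuous by the symmetry \eqref{eq: sym} (Lemma \ref{lem: h as funct of xi}), which makes $h$ well defined and, by dominated convergence, continuous; and then Morera's theorem applied in each complex variable separately --- with the contour integral and the $\xi$-integral interchanged by Fubini over the compact set $\operatorname{supp}\widehat{f}$ --- yields separate analyticity, hence analyticity of $h$. If you want to complete your proof without reproving those quantitative estimates, you should replace your final step by this Morera--Fubini argument; as written, the proposal establishes $f=h\circ\rho$ but not the claimed regularity of $h$.
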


Closed-formulas for choosing the functions $h_\xi$ and $h$ will be provided in Section \ref{sec: sph} (Definition \ref{def: h_xi}) and Section \ref{sec: main} (Definition \ref{def: h}), respectively.

As a consequence, Gerald Schwarz's result may be interpreted as showing that for a smooth integrable $K$-invariant function $g$ with compact support, the Gelfand transform  $\mathcal{F}(g):\Lambda\to \mathbb C$ admits a smooth extension $h:\mathbb R^\ell \to \mathbb C$, that is, $\mathcal{F}(g)(\rho(\xi))=h(\rho(\xi))$.  
Indeed, for such a function $g$, consider the function $f=\widehat{g}$. Applying  Theorem \ref{thm: schwarz2}, we obtain that for this $f:\mathbb R^n\to \mathbb C$, there exists a regular function $h:\mathbb R^\ell\to \mathbb C$ such that
$$h(\rho(x))=f(x)=\widehat{g}(x)=\mathcal{F}(g)(\rho(x)),$$
where we have used \eqref{eq: geltansd and fourier} and \eqref{eq: f transf en rho}.
Hence, another contribution of this work can be  understood as addressing the extension problem for the Gelfand transform in the setting of ``abelian pairs'', i.e.,  Gelfand pairs of the form $(K,\mathbb R^n)$. Specifically, the main result of this paper, Theorem \ref{thm: schwarz2}, can be also read in terms of the following corollary:

\begin{corollary}\label{eq: nuestro thm}
    Given $g\in C^\infty(\mathbb R^n)^K$ with compact support, consider $h:\mathbb R^\ell \to \mathbb C$ of the form 
    \[
    h( t) =\int_{\mathbb{R}^n}  g(- \xi )\,  h_{\xi }(
    t) \, d\xi, \quad \text{ for }  h_{\xi } \text{ given in Proposition \ref{prop: aux sph f}.}
    \]
    Then, it holds that $h\in C^\infty(\mathbb R^\ell)$ and  $\mathcal{F}(g)(\varphi_\xi)=h(\rho(\xi))$.
\end{corollary}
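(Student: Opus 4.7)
The corollary asserts two things: the identity $\mathcal{F}(g)(\varphi_\xi) = h(\rho(\xi))$ and the smoothness $h \in C^\infty(\mathbb{R}^\ell)$. The plan is to establish the identity first, since it is essentially algebraic, and then deduce smoothness by differentiation under the integral.

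For the identity, I would substitute $t = \rho(\xi)$ into the defining expression of $h$ and invoke Proposition \ref{prop: aux sph f} to replace $h_\eta(\rho(\xi))$ by $\varphi_\eta(\xi)$, obtaining
\[
h(\rho(\xi)) \;=\; \int_{\mathbb{R}^n} g(-\eta)\, \varphi_\eta(\xi)\, d\eta.
\]
The change of variable $\eta \mapsto -x$ turns this into $\int_{\mathbb{R}^n} g(x)\, \varphi_{-x}(\xi)\, dx$. A short computation using $\varphi_\zeta(y) = \int_K e^{i\langle y, k\cdot\zeta\rangle}\, dk$, the orthogonality of $K$ (so that $\langle \xi, k\cdot x\rangle = \langle k^{-1}\cdot \xi, x\rangle$), and the inversion-invariance of the normalized Haar measure on $K$ yields the symmetry $\varphi_{-x}(\xi) = \varphi_\xi(-x)$. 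Consequently the last integral equals $\int_{\mathbb{R}^n} g(x)\, \varphi_\xi(-x)\, dx = \mathcal{F}(g)(\varphi_\xi)$ by \eqref{eq: Gelf t}.

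For smoothness, the compact support of $g$ confines the integration in the definition of $h$ to a compact set $\operatorname{supp}(g) \subset \mathbb{R}^n$. The natural approach is then to differentiate under the integral, so that for every multi-index $\alpha$ on $\mathbb{R}^\ell$,
\[
\partial^\alpha h(t) \;=\; \int_{\mathbb{R}^n} g(-\xi)\, \partial^\alpha_t h_\xi(t)\, d\xi.
\]
This is justified provided $(\xi, t) \mapsto h_\xi(t)$ is jointly continuous together with all its $t$-derivatives, since then by compactness each $\partial^\alpha_t h_\xi(t)$ is uniformly bounded for $\xi \in \operatorname{supp}(g)$ and $t$ in any fixed compact subset of $\mathbb{R}^\ell$, and $g(-\xi)$ provides an integrable dominating factor.

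The main obstacle is therefore to verify this joint regularity of $(\xi, t) \mapsto h_\xi(t)$. This is not something the statement of the corollary supplies for free: it has to be read off from the explicit formula in Definition \ref{def: h_xi}, where $h_\xi(t)$ is built out of integrals over $K$ of quantities that are real analytic in both $\xi$ and $t$. Once this joint smoothness is in hand, a standard dominated-convergence argument yields $h \in C^\infty(\mathbb{R}^\ell)$, completing the proof.
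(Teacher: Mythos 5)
Your route is genuinely different from the paper's, and only half of it is complete. The paper proves this corollary in two lines by reduction: set $f=\widehat{g}$, note that $f\in L^1(\mathbb R^n)^K$ with $\widehat{f}=\widehat{\widehat{g}}\,(\xi)=g(-\xi)$ of compact support, and apply Theorem \ref{thm: schwarz2}. The $h$ of Definition \ref{def: h} then becomes exactly the $h$ in the corollary, Theorem \ref{thm: schwarz2} already delivers that $h$ is real analytic (hence $C^\infty$), and the identity follows from $\mathcal F(g)(\varphi_\xi)=\widehat{g}(\xi)=f(\xi)=h(\rho(\xi))$ using \eqref{eq: geltansd and fourier}. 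You instead re-prove both claims from scratch. Your argument for the identity is correct and is essentially the computation underlying \eqref{eq: geltansd and fourier} run in reverse: substituting $t=\rho(\xi)$, using Proposition \ref{prop: aux sph f} pointwise, and invoking the symmetry $\varphi_{-x}(\xi)=\varphi_\xi(-x)$, which is just \eqref{eq: sym}. (Be aware of the measure normalization: the $d\xi$ in the definition of $h$ carries the $(2\pi)^{-n}$ factor of \eqref{eq: inv formula with xi}, which is what makes $\widehat{\widehat{g}}\,(\xi)=g(-\xi)$ exact; the paper is itself loose about this, so I do not count it against you.)

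The smoothness half, however, contains a genuine gap. You reduce $h\in C^\infty(\mathbb R^\ell)$ to the claim that $(\xi,t)\mapsto \partial_t^\alpha h_\xi(t)$ is jointly continuous with locally uniform bounds, and then assert this can be ``read off'' from Definition \ref{def: h_xi} because $h_\xi(t)$ is ``built out of integrals over $K$.'' That is not how $h_\xi$ is defined: Definition \ref{def: h_xi} gives $h_\xi(t)=\sum_J q_J(\rho(\xi))\,t^J$, a power series whose coefficients are polynomials in $\rho(\xi)$, and the paper only establishes separate regularity --- locally uniform convergence in $t$ for fixed $\xi$ (Proposition \ref{prop: aux sph f}) and in $\xi$ for fixed $t$ (Lemma \ref{lem: h as funct of xi}). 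To make your differentiation under the integral legitimate you must show that $\sum_J |q_J(\rho(\xi))|\,|t|^J$ converges locally uniformly in $(\xi,t)$ \emph{jointly}; this does follow by combining the two dominating estimates \eqref{eq: replace t by rho(x0)2} and \eqref{eq: replace t by rho(x0)} (both sides are dominated by the absolutely convergent series for $\varphi_{x_0}(\xi)$, uniformly for $\xi$ in a compact set containing $\operatorname{supp}(g)$), after which term-by-term $t$-differentiation of the power series gives the required bounds on $\partial_t^\alpha h_\xi(t)$. As written, though, this key step is announced rather than proved, and it is precisely the step that the paper's reduction to Theorem \ref{thm: schwarz2} (where continuity is handled by dominated convergence and analyticity by Morera plus Fubini over the compact set $\operatorname{supp}\widehat{f}\,$) is designed to avoid. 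Note also that the paper's route yields the stronger conclusion that $h$ is real analytic, not merely $C^\infty$.
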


It is important to mention that in recent years, the study of the Gelfand transform for Gelfand pairs arising from semidirect products \( K \ltimes N \), where \textcolor{blue}{\( N \)} is a connected, simply connected  nilpotent Lie group and \( K \) acts on \( N \) by automorphisms, has received considerable attention. Indeed, several results concerning the extension of the Gelfand transform have already been established in the literature (see, for e.g.,  \cite{ABR, ABR2, F1, F2}).
Specifically, if $(K\ltimes N, K)$ (or simply, \( (K, N) \)) is a Gelfand pair,  it can be shown that the associated spectrum \( \Lambda \)  can be embedded in a real space \( \mathbb{R}^\ell \). 
Let \textcolor{blue}{ \( \mathcal{S}(\Lambda) \)} denote the space of functions \( f: \Lambda \to \mathbb{C} \) that admit a Schwartz-class extension to \( \mathbb{R}^\ell \) (that is, $\mathcal{S}(\Lambda)=\mathcal{S}(\mathbb R^\ell)/\sim$, endowed with the quotient topology, where $f_1\sim f_2$ if and only if $f_1(\lambda)=f_2(\lambda)$ for all $\lambda\in \Lambda$).
In this setting, the Gelfand transform becomes an isomorphism from the space of $K$-invariant Schwartz functions over $N$,  \textcolor{blue}{\( \mathcal{S}(N)^K \)},  onto \( \mathcal{S}(\Lambda) \). That is, for every \( g \in \mathcal{S}(N)^K \), its Gelfand transform \( \mathcal{F}(g) \) admits a Schwartz extension to \( \mathbb{R}^\ell \). Moreover, a control of \( \mathcal{F} \) can be established in terms of the seminorms defining the topologies of \( \mathcal{S}(N)^K \) and \( \mathcal{S}(\mathbb{R}^\ell) \).

For the particular abelian case \( N = \mathbb{R}^n \), 
in \cite[Section 6]{ABR2} the authors prove the extension results for the Gelfand transform between Schwartz spaces by first invoking Theorem \ref{thm: schwarz} by Gerald Schwarz in \cite{Schwarz} and relying on the work \cite{Mather}.
Instead, in this work our goal is to prove from scratch Theorem \ref{thm: schwarz} by appealing to techniques of the Gelfand theory. It is remarkable that the arguments employed in the aforementioned papers \cite{ABR, ABR2, F1, F2} are very deep. The idea of this article is to be as elemental as possible.

\subsection*{Organization of the Paper} In Section \ref{sec: prelim}, we will elaborate on all the terminology briefly introduced above. In our review of Gelfand’s theory, we introduce the notions of Gelfand pairs, spherical functions, and the associated Gelfand transform, with a focus on the Euclidean case. In Section \ref{sec: new proof}, we present our main contributions. We begin in Section \ref{sec: sph} by considering $f$ as a spherical function ($f=\varphi_\xi$) in the statement of Theorem \ref{thm: schwarz}, which leads to Definition \ref{def: h_xi} and the proof of Proposition \ref{prop: aux sph f}. Building on these key components, in Section \ref{sec: main} we provide the proof of Theorem \ref{thm: schwarz2}, that is, our version of Gerald Schwarz’s theorem from the perspective of Gelfand theory. Finally, in Section \ref{sec: ex} we illustrate our results with examples.

\section{Preliminaries}\label{sec: prelim}

Gelfand's theory is devoted to studying commutative Banach algebras and their \textit{spectrums}. 
One of the most important results from Gelfand's theory establishes that a commutative Banach algebra $\textcolor{blue}{\mathcal{A}}$ can be mapped through a continuous group homomorphism --the so-called \textit{Gelfand transform}-- into an algebra of continuous functions defined over the spectrum of $\mathcal{A}$ (see, for example, \cite{Dijk,Folland}). 

As an application to Harmonic Analysis of the Gelfand theory on commutative Banach algebras, given a locally compact Hausdorff topological group $\textcolor{blue}{G}$, natural functional spaces to consider are  the \textit{group algebra} $C_c(G)$ of complex-valued continuous functions on $G$ with compact support 
endowed with the convolution product \textcolor{blue}{$*$},  or its closure under the $L^1$-norm, that is, the space of integrable functions $L^1(G)$. To apply  Gelfand's theory, the convolution product must be commutative. As this happens if and only if the underlying group $G$ is abelian, there has been a great interest in determining \textit{subalgebras} of $L^1(G)$ that are commutative under the convolution product
and invariant under the action of a subgroup of $G$. This yields to the definition of \textit{Gelfand pairs} (see, e.g., \cite{BJR,Carcano,ferrari_ruffino,F1,F2,Kikuchi,Vinberg}):
Given $K$ a compact subgroup of $G$, we say that $\textcolor{blue}{(G,K)}$ is a \textit{Gelfand pair}, or that the homogeneous space $G/K$ is \textit{commutative}, if the convolution subalgebra $\textcolor{blue}{L^1(G)^K}$ of bi-$K$-invariant integrable functions (i.e., $f(k_1xk_2)=f(x)$ for all $k_1,k_2\in K$, $x\in G$) is commutative. 
When having $(G,K)$ a Gelfand pair, the Gelfand transform on the commutative Banach algebra $L^1(G)^K$ plays the role of the classical Fourier transform. Indeed, in the context of Fourier analysis on groups, the Gelfand transform is often called the \textit{spherical Fourier transform}.

To define such a transform we need to introduce the spectrum of the algebra $L^1(G)^K$, or the so-called set of spherical functions. 
We say that a function $\varphi\in C(G)^K$ is \textit{spherical} if the associated \textit{character} $\chi_\varphi$, defined by
\begin{equation*}
    \textcolor{blue}{\chi_\varphi(f)}:=\int_G f(x) \, \varphi(-x) \, dx \qquad (\forall f \in C_c(G)^K),
\end{equation*}
satisfies
\begin{equation*}
    \chi_\varphi(f{*}g)=\chi_\varphi(f)\chi_\varphi(g),
\end{equation*}
In this case, the spectrum $\Sigma$ of the algebra $L^1(G)^K$ can be identified with the set of all bounded spherical functions \cite{helgason}. Then, 
$\Sigma\subset L^\infty(G)^K$ and it is  endowed with the \textit{Gelfand topology}, that is, the weak*-topology (i.e., we say that $\phi_n\to \phi$ as $n\to \infty$ if and only if $\chi_{\phi_n}(f)\to \chi_\phi(f)$ as $n\to \infty$, for all $f\in L^1(G)^K$) \cite{ferrari_ruffino}. It can be proven that this topology coincides with the topology of the uniform convergence on compact sets. 
When $G$ and $K$ are Lie groups, the spherical functions can be characterized by a differential point of view. 
Finally, the spherical Fourier transform of a function $f\in L^1(G)^K$ 
is defined
as the Gelfand transform associated to the commutative algebra $\mathcal{A}=L^1(G)^K$: as the function $\mathcal F(f): \Sigma\to\mathbb C$ given by the formula 
\begin{equation}\label{eq: GT}
    \mathcal{F}(f)(\varphi)=\int_G f(x) \, \varphi(-x) \, dx,
\end{equation}
where $\textcolor{blue}{-x}$ denotes the inverse of $x$ in the group $G$ (a particular case was introduced in \eqref{eq: Gelf t}).

A relevant family of study is when $G$ is a semidirect product $G=K\ltimes N$, where $N$ is a connected and  simply connected nilpotent Lie group and $K$ acts by automorphisms on $N$ (see, for e.g., \cite{Wolf,BJR}). 
Here, $K \ltimes N$ denotes the semidirect product of $K$ and $N$, that is, the manifold $K \times N$ equipped with the group product 
\[
\textcolor{blue}{(k, x)(k', x')}: = (kk', x + k \cdot x'),
\]
where $k\cdot x$ denotes the action of $k\in K$ on $x\in \mathbb N$, and $+$ is the group operation  in $N$ (not necessarily abelian)
giving it the structure of a Lie group. 
In this case, the algebra of bi-$K$-invariant functions $L^1(K\ltimes N)^K$ can be identified with the algebra $L^1(N)^K$ of $K$-invariant functions
$$\textcolor{blue}{L^1(N)^K}:=\{f\in L^1(N): \, f(k\cdot x)=f(x) \quad \forall k\in K, x\in N \},$$
and for simplicity in the notation it is commonly used $(K,N)$ in place of the pair $(K\ltimes N,K)$. 
{To see this, first note that if \( f: K \ltimes N \to \mathbb{C} \) is invariant under the right action of \( K \), then
\[
f(k, x) = f\left((\mathrm{Id}, x)(k, 0)\right) = f(\mathrm{Id}, x),
\]
where \( \mathrm{Id} \) denotes the identity element of \( K \). Thus, \( f(k, x) \) can be identified with the function \( f(\mathrm{Id}, \cdot): N \to \mathbb{C} \).
Now, suppose that \( f \) is also invariant under the left action of \( K \). Then,
\[
f((k,0)(\mathrm{Id}, x)) = f(k, k \cdot x) = f(\mathrm{Id}, k \cdot x),
\]
where the last equality follows from the computation above. This implies that the function \( f(\mathrm{Id}, \cdot) \) is invariant under the natural action of \( K \) on \( N \).
}

In these cases, it is well known that the spherical functions are of positive type \cite{BJR}
and can be characterized 
as the eigenfunctions of the algebra $\textcolor{blue}{\mathbb{D}(N)^K}$ of all the differentiable operators on $N$ invariant by $N$-left translations and invariant under the action of $K$ 
\begin{equation}\label{eq: eigenfunc}
    D\varphi=\lambda\varphi \qquad \forall D\in \mathbb{D}(N)^K,
\end{equation}
normalized by taking the value $1$ at the group identity.
Such algebra $\mathbb{D}(N)^K$ is finitely-generated \cite{helgason2}, and we denote by $\textcolor{blue}{\{D_1,\dots,D_\ell\}}$ a system of generators. Moreover, it holds that the spherical functions are analytic, and they are completely determined by the eigenvalues 
with respect to such arbitrary set of generators (see \cite[page 400]{helgason}).
Then, to each spherical function $\varphi$ we can associate an $\ell$-tuple of eigenvalues $\textcolor{blue}{(\lambda_1(\varphi),\dots, \lambda_\ell(\varphi))}$ with respect to the differential operators $D_1,\dots, D_\ell$. Let
\begin{equation}\label{eq: lambda set}
    \textcolor{blue}{\Lambda}:=\left\{(\lambda_1(\varphi),\dots, \lambda_\ell(\varphi)):\, \varphi\in \Sigma\right\}
\end{equation}

\begin{theorem}\cite{ferrari_ruffino}\label{thm: ff}
     $\Lambda$ is a closed subset of $\mathbb{C}^\ell$ and the correspondence $$\Sigma\ni \varphi\mapsto (\lambda_1(\varphi),\dots, \lambda_\ell(\varphi))\in \Lambda$$ is a homeomorphism between $\Sigma$, with the Gelfand topology, and $\Lambda$, with the relative topology of $\mathbb C^\ell$.
\end{theorem}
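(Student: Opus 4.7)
The plan is to establish four properties of the map $\Psi:\Sigma\to\mathbb{C}^\ell$ defined by $\Psi(\varphi):=(\lambda_1(\varphi),\dots,\lambda_\ell(\varphi))$: injectivity, continuity, continuity of its inverse on $\Lambda$, and closedness of $\Lambda$ in $\mathbb{C}^\ell$. \emph{Injectivity} is essentially granted: as recalled in the excerpt, a bounded spherical function is analytic and completely determined by the $\ell$-tuple of its eigenvalues with respect to the generators $D_1,\dots,D_\ell$ of $\mathbb{D}(N)^K$, together with the normalization $\varphi(e)=1$.

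For \emph{continuity of $\Psi$}, the idea is to use the weak (distributional) formulation of the eigenvalue equations rather than differentiate uniformly convergent sequences directly. Fix a spherical $\varphi$; since $\varphi$ is continuous with $\varphi(e)=1$, one can choose a smooth compactly supported test function $\psi$ with $\int_N\varphi\,\psi\,dx\neq 0$. Using the formal transpose $D_j^T$, the identity $D_j\varphi=\lambda_j(\varphi)\varphi$ yields
\[
\lambda_j(\varphi)\int_N\varphi\,\psi\,dx=\int_N\varphi\,(D_j^T\psi)\,dx.
\]
The Gelfand topology on $\Sigma$ coincides with uniform convergence on compact subsets of $N$, so if $\varphi_n\to\varphi$ in $\Sigma$, the compact support of $\psi$ and $D_j^T\psi$ allows one to pass to the limit in the analogous identity written for each $\varphi_n$, giving $\lambda_j(\varphi_n)\to\lambda_j(\varphi)$.

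The main obstacle is the converse direction, which will handle \emph{continuity of $\Psi^{-1}$} and \emph{closedness of $\Lambda$} simultaneously. Starting from a sequence $\Psi(\varphi_n)\to\lambda\in\mathbb{C}^\ell$, the strategy is a compactness argument. Since every bounded spherical function in this setting is of positive type with $\varphi(e)=1$, one has $|\varphi|\le 1$ on $N$, so the family $\{\varphi_n\}$ is uniformly bounded. Equicontinuity on compact subsets of $N$ should be extracted from the PDE system $D_j\varphi_n=\lambda_j(\varphi_n)\varphi_n$ via a hypoelliptic $K$-invariant element of $\mathbb{D}(N)^K$ (for instance a $K$-invariant sub-Laplacian), whose interior estimates give local $C^0$ control of derivatives of $\varphi_n$ in terms of $\|\varphi_n\|_\infty$ and the bounded quantities $|\lambda_j(\varphi_n)|$. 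Arzel\`a--Ascoli then yields a subsequence $\varphi_{n_k}\to\varphi_\infty$ locally uniformly. Passing to the limit in the weak eigenvalue identity above, and in the defining functional equation for spherical functions $\int_K\varphi_{n_k}(x+k\cdot y)\,dk=\varphi_{n_k}(x)\varphi_{n_k}(y)$, shows that $\varphi_\infty$ is again a bounded spherical function with $\Psi(\varphi_\infty)=\lambda$. This proves $\lambda\in\Lambda$, and since injectivity forces $\varphi_\infty$ to be the unique preimage of $\lambda$, every subsequential limit agrees, so the whole sequence $\varphi_n$ converges to $\varphi_\infty$ in $\Sigma$, establishing continuity of $\Psi^{-1}$.

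The step I expect to be the most technical is producing \emph{equicontinuity} of the family $\Sigma$ on compact sets in full generality, since this requires analytic input (Sobolev/hypoelliptic estimates for a specific operator in $\mathbb{D}(N)^K$) beyond pure Gelfand theory. In the Euclidean case $N=\mathbb{R}^n$ of the present paper, this difficulty is bypassed entirely by the explicit integral representation $\varphi_\xi(x)=\int_K e^{i\langle x,k\cdot\xi\rangle}\,dk$, which provides uniform local Lipschitz estimates via dominated convergence once the parameters $\lambda=\rho(\xi)$ stay bounded.
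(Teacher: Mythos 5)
The paper does not prove this statement at all: Theorem \ref{thm: ff} is imported verbatim from the reference \cite{ferrari_ruffino}, so there is no internal proof to compare yours against. Judged on its own terms, your outline is the standard and correct one (injectivity from the determination of a spherical function by its eigenvalues and the normalization $\varphi(e)=1$; continuity of $\Psi$ from the weak form of $D_j\varphi=\lambda_j\varphi$ tested against a $\psi$ with $\int\varphi\psi\neq 0$, using that the Gelfand topology on $\Sigma$ is compact convergence; closedness and continuity of $\Psi^{-1}$ by a compactness argument), and it is close in spirit to what Ferrari Ruffino actually does. The one step you leave genuinely open is the equicontinuity needed for Arzel\`a--Ascoli. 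Two remarks on closing it. First, your hypoellipticity route can be made concrete: choosing a $K$-invariant inner product on the Lie algebra $\mathfrak n$ and an orthonormal basis $X_1,\dots,X_N$, the operator $L=\sum_i X_i^2$ is a left-invariant, $K$-invariant, \emph{elliptic} element of $\mathbb{D}(N)^K$, hence a polynomial $P(D_1,\dots,D_\ell)$ in your generators; then $L\varphi_n=P(\Psi(\varphi_n))\varphi_n$ with bounded right-hand side, and interior elliptic estimates give local equicontinuity. Second, and more cheaply, since in this setting the bounded spherical functions are of positive type with $\varphi(e)=1$, you can avoid PDE estimates altogether: the set of positive-definite functions bounded by $1$ is weak-* compact in $L^\infty$, and on positive-definite functions weak-* convergence coincides with uniform convergence on compact sets (Raikov's theorem), which yields the convergent subsequence directly. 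Finally, your reduction from subsequential limits to convergence of the full sequence, and from sequential continuity to continuity, is legitimate here because both $\Lambda\subset\mathbb{C}^\ell$ and $\Sigma$ (with compact convergence on a second-countable space) are metrizable; it is worth saying so explicitly.
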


In this work, we will consider the abelian or Euclidean case, that is, $N=\mathbb R^n$. 
The classical Fourier transform on the commutative convolution algebra $L^1(\mathbb{R}^n)$ (defined in \eqref{eq: class FT})
is a continuous injection into the space of continuous functions which vanish at infinity $C_0(\mathbb{R}^n)$  (\textit{Riemann-Lebesgue Lemma}), which carries convolutions to point-wise products.
Indeed, the classical Fourier transform can be identified with the Gelfand transform associated to the  Gelfand pair $(K,\mathbb R^n)$, where the $K$ is the trivial group $K=\{\mathrm{Id}\}$.

As $L^1(\mathbb{R}^n)$ is commutative with the convolution product,  the subalgebra
\textcolor{blue}{$L^1(\mathbb{R}^n)^K$}
is always commutative. Thus, $(K,\mathbb{R}^n)$ is a Gelfand pair for any compact subgroup $K$ of the orthogonal group $\mathrm{O}(n)$ acting naturally on $\mathbb R^n$.

Given $f\in L^1(\mathbb R^n)^K$,  its  Gelfand transform \eqref{eq: GT} takes the form  \eqref{eq: Gelf t} where the 
the {spherical functions}, denoted by $\varphi_\xi$,  can be  determined by the integral formula also given  in \eqref{eq: Gelf t}.
Notice that each function $\varphi_\xi$ in \eqref{eq: Gelf t} is infinitely differentiable, $K$-invariant, bounded by one, and of positive type. Moreover, $\varphi_\xi=\varphi_{\xi'}$ if and only if $\xi=k\cdot \xi'$ for some $k\in K$.
We refer the reader to \cite{Wolf} for a nice study of spherical functions on Euclidean spaces. As a consequence of the definition \eqref{eq: Gelf t}, we highlight a very simple fact, namely, their symmetry:
\begin{equation}\label{eq: sym}
    \varphi_\xi(x)=\varphi_x(\xi) \qquad (\forall x,\xi\in \mathbb R^n).
\end{equation}

Notice that, given $f\in L^1(\mathbb R^n)^K$, by using our definitions and the invariance of $f$ under $K\subseteq \mathrm{O}(n)$, we can relate the spherical Fourier transform of $f$ and the classical Fourier transform of $f$ recovering the equality \eqref{eq: geltansd and fourier}: 
\begin{align*}
 \mathcal{F}({f})\left( \varphi_\xi \right)&=\int_{\mathbb{R}^n} f(x) \varphi
_{\xi }\left( -x\right) dx=\int_{\mathbb R^n } f(x) \int_{K}e^{-i\left\langle
x,k\cdot\xi \right\rangle }dk \, dx\\
&=\int_{\mathbb R^n } \int_{K} f\left( k \cdot x\right) e^{-i\left\langle
x,\xi \right\rangle }dk \, dx=\int_{\mathbb R^n } \int_{K} f(x) e^{-i\left\langle
x,\xi \right\rangle }dx
=\widehat{f}\left(\xi \right) . 
\end{align*}
Hence, under the hypothesis of the 
\textit{inversion formula} for the classical Fourier transform, we can write
\begin{equation}\label{eq: inv formula with xi}
    f(x)=\int_{\mathbb{R}^n}\mathcal{F}(f)(\varphi_\xi) \, \varphi_\xi(x) \, d\xi, 
\end{equation}
where $d\xi$ denotes the Lebesgue measure on $\mathbb R^n$ multiplied by $\frac{1}{(2\pi)^n}$.
Moreover, there exists a \textit{Plancherel measure} $\mu$, that is, a positive Borel measure on $\Lambda$,  such that an \textit{inversion formula} of the following form
\begin{equation*}
    f(x)=\int_{\Lambda}\mathcal{F}(f)(\varphi_\xi) \, \varphi_\xi(x) \, d\mu(\varphi_\xi) 
\end{equation*}
holds under the usual integrability conditions, and where we identify $\Lambda$ with $\Sigma$ due to Theorem \ref{thm: ff}.

\section{New proof of Schwarz's Theorem}\label{sec: new proof}

As in the statement of Theorem \ref{thm: schwarz}, let $K$ be a compact group acting orthogonally on $\mathbb{R}^n$, and let $\left\{\rho_1, \ldots, \rho_\ell\right\}$ be a set of generators of the algebra $\mathcal{P}(\mathbb{R}^n)^K$ of $K$-invariant polynomials on $\mathbb{R}^n$.
Clearly, we can assume that the polynomials $\rho _{1},...,\rho _{\ell}$ are homogeneous. 

Let us denote by $D_j$ the differential operator corresponding
to the polynomial $\rho_{j}$ (sometimes denoted as $\partial_{\rho_j}$), which, in simple words,  is obtained from $\rho_j$ changing the variables $x_k$ by the differential operators ${\partial_{x_k}}$, that is,\footnote{This can be generalized to abstract Lie groups through the so-called \textit{symmetrization map}  (see \cite[Theorem 4.3]{helgason}). In our case, since $\mathbb{R}^n$ is an abelian group, such map is the identity, that is, $x_k\mapsto {\partial_{x_k}}$.}
\begin{equation*}
    \text{if } \quad \rho_j(x)=\sum_{I=(i_1,\dots,i_n)}c_{I} \, x_1^{i_1} \, \cdot...\cdot x_n^{i_n}, \qquad \text{ then } \quad  D_j=\sum_{I=(i_1,\dots,i_n)}c_{I} \, \partial_{x_1}^{i_1} \, \cdot...\cdot \partial_{x_n}^{i_n}.
\end{equation*}
We observe that $D_{j}$ is left-invariant,
that is, a differential operator with constant coefficients, and moreover, it is $K$-invariant (i.e., for every differentiable $K$-invariant function $f$,  $D_{j}f$ is also $K$-invariant). Indeed, $\{D_1,\dots, D_\ell\}$ forms a system of generators of the algebra $\mathbb{D}(\mathbb R^n)^K$ of all $K$-invariant differentiable operators with constant coefficients.

By the integral expression \eqref{eq: Gelf t} of spherical functions, we have 
\begin{equation}\label{eq: D varphi = lambda varphi}
    D_j\varphi _{\xi }\left( x\right) =i^{\mathrm{deg}(\rho _{j})}\int_{K}\rho _{j}\left(
k\cdot\xi \right) e^{i\left\langle x,k\cdot \xi \right\rangle }dk=i^{\mathrm{deg}(\rho_{j})}\rho
_{j}(\xi) \varphi _{\xi }(x) 
\end{equation}
since $\rho _{j}$ is $K$-invariant. Thus, $\varphi_\xi$ is an eigenfunction of each $D_j$ corresponding to eigenvalue
\begin{equation}\label{eq: eigval}
    \lambda _{j}(\xi) :=i^{\mathrm{ deg}(\rho_{j})}\rho _{j}(\xi) . 
\end{equation}
Note that for simplicity in the notation we are writing $\lambda _{j}(\xi)$ instead of $\lambda _{j}\left( \varphi_\xi \right)$.

\begin{remark}\label{lem: spect in Rl} As a particular case of Theorem \ref{thm: ff}, the spectrum $\Sigma$ of the algebra $L^1(\mathbb R^n)^K$ is in correspondence with the image of $\rho=(\rho_1,\dots,\rho_\ell)$. Indeed, it is enough to change the above system of generator  $\{D_j\}_{j=1}^\ell$ of the algebra $\mathbb{D}(\mathbb R^n)^K$ by scalar multiples 
    $$\textcolor{blue}{\widetilde{D_j}}:=(-i)^{\mathrm{ deg}(\rho_{j})}D_j \qquad \text{ for } j=1,\dots,\ell.$$ 
    Then, from \eqref{eq: D varphi = lambda varphi} we obtain that the eigenvalues of the new operators $\widetilde{D_j}$ corresponding to the eigenfunctions $\varphi_\xi$   are
    $\rho_{j}(\xi)$.
       This shows that $\Lambda$ can be identified with the image set $\rho(\mathbb R^n)$.
\end{remark}

In what follows, we will use the following notation.  Given 
$\ell$ and $n$ dimensional multi-indexes $J=(j_1,\dots,j_\ell)$ and $I=(i_1,\dots, i_n)$, we denote 
\begin{equation}\label{eq: deg}
 \textcolor{blue}{|J|_{\ell,\rho}}:=j_{1}\mathrm{deg}(\rho _{1})+...+j_{\ell}\mathrm{deg}(\rho _{\ell}), \qquad \textcolor{blue}{|I|_n}=i_1+\dots+i_n .
\end{equation} 
We will also use the notation
\begin{equation*}
  \textcolor{blue}{\varrho(x)}:=\Pi_{k=1}^\ell\rho_k(x),  \qquad \textcolor{blue}{\mathbf{x}}:=\Pi_{k=1}^n x_k, \qquad \textcolor{blue}{D}:=D_1\dots D_\ell, \qquad \textcolor{blue}{\partial}:=\partial_{x_1}\dots\partial_{x_n},
\end{equation*}
where the first two are polynomials and the last two are differential operators. So, $\varrho(x)^J:=\Pi_{k=1}^\ell\rho_k(x)^{j_k}$, $\mathbf x^I:=\Pi_{k=1}^n x_k^{i_k}$,  $D^J:=D_1^{j_1}\dots D_\ell^{j_\ell}$, and $\partial^I=\partial_{x_1}^{i_1}\dots\partial_{x_n}^{i_n}$.

\subsection{Taylor expansions of spherical functions}\label{sec: sph}

It is well known that every {bounded} spherical function is real analytic \cite{helgason}. Thus, we write 
\begin{equation}\label{eq: real analytic}
    \varphi _{\xi }(x) =\sum_{m=0}^\infty p_{m, \xi }\left(x\right)   
\end{equation}
where each $\textcolor{blue}{p_{m,\xi}}$ is a homogeneous polynomial of degree $m$ and where the convergence of the series is absolute and uniform  over compacts. Moreover, since the action of $K$ on $\mathbb R^n$ is linear, each $p_{m,\xi}$ is $K$-invariant. Thus, by using the generators $\{\rho_1,\dots, \rho_\ell\}$ of $\mathcal{P}(\mathbb R^n)^K$, one can express each polynomial $p_{m,\xi}$ as
\begin{equation}\label{eq: pol}
p_{m,\xi}(x) =\sum_{J_m: \, |J_m|_{\ell,\rho}=m}a_{J_m,\xi} \, \varrho (x) ^{J_m}=\sum_{I_m:\, |I_m|_n=m}b_{I_m,\xi} \, \mathbf x^{I_m}, 
\end{equation}
for appropriate coefficients $\textcolor{blue}{a_{J_m,\xi}}$ and $\textcolor{blue}{b_{I_m,\xi}}$,
where the first sum runs over multi-indexes $J_m=(j_1,\dots,j_\ell)$, and the second sum is the Taylor expansion running over multi-indexes $I_m=(i_1,\dots,i_n)$. Thus, let us write
\begin{equation}\label{eq: spherical function as a series}
     \varphi _{\xi }(x) =\sum_{J=(j_1,\dots,j_\ell)}a_{J,\xi} \, \varrho (x)^{J}=\sum_{I=(i_1,\dots,i_n)}b_{I,\xi} \,  \mathbf x^{I},
\end{equation}
Then, 
\begin{equation}\label{eq: tay 1}
    (\partial^I\varphi_\xi)(0)=I! \, b_{I,\xi}, \qquad \text{where } \textcolor{blue}{I!}:=i_1!\cdot ... \cdot i_n!
\end{equation}
As in \cite[expressions (2) and (3) in the proof of Prop. 2.2, page 400]{helgason}, for each multi-index $I=(i_1,\dots, i_n)$, we consider the differential operator 
\begin{equation*}
\textcolor{blue}{\partial_0^I}:=\int_K \left(Ad(k)\partial^I\right) \, dk,
\end{equation*}
where adjoint action 
$Ad(k)$ on the differential operator  
$\partial$ is defined by conjugation,
that is, for all sufficiently differentiable  $f$ on $\mathbb R^n$, 
\begin{equation*}
  \left( Ad(k)\partial^I f \right)(x) =\left( \partial^I g \right)(k\cdot x), \quad \text{ where } g(y):=f(k^{-1}\cdot y).
\end{equation*}
Moreover, for  all sufficiently differentiable $K$-invariant functions $f$  on $\mathbb R^n$ it holds that
\begin{equation*}
    \left(\partial^I f\right)(0)=\left(\partial_0^I f\right)(0). 
\end{equation*}
Therefore, $\partial_0^I\in \mathbb D(\mathbb R^n)^K$ and so it can be written as a polynomial $\textcolor{blue}{Q_I}$ on the generators $D_1,\dots, D_\ell$:
\begin{equation*}
    \partial_0^I=Q_I(D_1,\dots,D_\ell)    .
\end{equation*}
We note that the order of the differential operator $\partial^{I_m}$ is $|I_m|_n=m$, and so it is the order of differential operator $\partial_0^{I_m}$. 

As a consequence, using \eqref{eq: eigenfunc} and Remark \ref{lem: spect in Rl}, we have that:
\begin{equation}\label{eq: b_I as pol in lambda}
   b_{I,\xi} = \frac{1}{I!}(\partial^I\varphi_\xi)(0)=\frac{1}{I!}(\partial_0^I\varphi_\xi)(0)
   =\frac{1}{I!}Q_I(\rho_1(\xi),\dots,\rho_\ell(\xi)).
\end{equation}
Besides, from \eqref{eq: pol}, for each $\ell$-dimensional multi-index $J_m$ with $|J_m|_{\ell,\rho}=m$, the  coefficient $a_{J_m,\xi}$ depends linearly on $\{b_{I_m,\xi}\}_{I_m:\, |I_m|_n=m}$. 
Thus, from \eqref{eq: b_I as pol in lambda}, for each $\ell$-dimensional multi-index $J$, there exists a polynomial $\textcolor{blue}{q_J}$ in $\ell$ variables such that:
\begin{equation}\label{eq: a_I as pol in rho}
   a_{J,\xi} = q_J(\rho_1(\xi),\dots,\rho_\ell(\xi)).
\end{equation} 
For simplicity, we write $q_J(\rho_1(\xi),\dots,\rho_\ell(\xi))=q_J(\rho(\xi))$.
Combining this with the series expression  \eqref{eq: spherical function as a series} for $\varphi_\xi$, we can write
\begin{align}\label{eq: series varpi_xi}
\varphi _{\xi }(x) &=\sum_{J}q_J(\rho(\xi)) \, \varrho (x) ^{J} .   
\end{align}
We recall that the series converges absolutely and uniformly over compacts. Indeed, due to the symmetry $\eqref{eq: sym}$, the series \eqref{eq: series varpi_xi} converges uniformly as a function of $x$ with fixed $\xi$, and also with respect to $\xi$ for fixed $x$.

\begin{definition}\label{def: h_xi}    Let $(K,\mathbb R^n)$ be a Gelfand pair, and let $\varphi _{\xi }$ be an associated bounded spherical function. By using the previous notation, we define the map $\textcolor{blue}{h_\xi}:\mathbb R^\ell\to \mathbb C$
\begin{equation}\label{eq: h_xi}
    h_{\xi }\left( t\right) :=\sum_{J}q_J(\rho(\xi)) \, t^{J}, 
\end{equation}
where $\textcolor{blue}{t}:=\left( t_{1},...,t_{\ell}\right)$ and by abuse of notation $t^{J}:=t_{1}{}^{j_{1}}\cdot ...\cdot t_{\ell}{}^{j_{\ell}}$ for the multi-index $J=(j_1,\dots,j_\ell)$. 
\end{definition}

Notice that for each spherical function $\varphi_\xi$ we have associated $h_\xi$ given by \eqref{eq: h_xi}. Now,
our goal is to prove Proposition \ref{prop: aux sph f}, which can be interpreted as a  version of Theorem \ref{thm: schwarz} for spherical functions. Indeed, for each $\varphi_\xi:\mathbb R^n\to \mathbb C$ we will show that the  function $h_\xi:\mathbb R^\ell\to \mathbb C$ in Definition \ref{def: h_xi} is well-defined, 
satisfies $\varphi_\xi(x)=h_\xi(\rho(x))$ and has certain regularity properties. 
This is the most important result of this section and will be crucial in the next section, but 
we need first the following auxiliary result.

\begin{lemma}\label{lem: pol}\, 
Let  $p_{1},p_{2},\dots,p_\ell$ be non-null homogeneous polynomials in $\mathbb{R}^n$. 
Given a positive
    number $r$, there exists $x_{0}\in \mathbb{R}^{n}$ such that
 $$\max\{|p_{1}(x_0)|, %
 |p_{2}(x_0)|,\dots, |p_\ell(x_0)|\}\geq r.$$    
\end{lemma}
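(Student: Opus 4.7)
The plan is to reduce the claim to a single non-constant polynomial and exploit homogeneity by scaling. The proof does not require any deep machinery; the only subtle point is to verify that at least one of the $p_j$ has positive degree.

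First, I would observe that the statement for arbitrary $r > 0$ forces at least one of the $p_j$ to be of positive degree. Indeed, if every $p_j$ were homogeneous of degree $0$, each would be a nonzero constant and $\max_j |p_j(x)|$ would be a fixed positive number independent of $x$, so the claim could fail for large $r$. In the context of this paper, the generators $\rho_1,\dots,\rho_\ell$ of $\mathcal{P}(\mathbb{R}^n)^K$ are chosen to be of positive degree, so this requirement is automatic.

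After relabeling, I would assume $p_1$ is homogeneous of degree $d_1 \geq 1$. Since $p_1$ is non-null, I can pick some $y \in \mathbb{R}^n$ (necessarily $y \neq 0$) with $p_1(y) \neq 0$. Homogeneity then yields
$$|p_1(ty)| = t^{d_1}\, |p_1(y)|, \qquad t > 0,$$
and choosing $t := \bigl(r/|p_1(y)|\bigr)^{1/d_1}$ produces $x_0 := ty$ with $|p_1(x_0)| \geq r$. In particular $\max_{1 \leq j \leq \ell} |p_j(x_0)| \geq r$, which is the desired conclusion.

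There is essentially no substantive obstacle: once one notices that it suffices to make a single $|p_j|$ large, the argument reduces to the elementary fact that a non-null homogeneous polynomial of positive degree is unbounded in absolute value on $\mathbb{R}^n$. The only thing to be careful about is the reduction itself, namely to pick a $p_j$ of positive degree at the outset.
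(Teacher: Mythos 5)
Your argument is valid for the statement exactly as printed: making a single $|p_j|$ exceed $r$ does give $\max_j |p_j(x_0)| \geq r$, and your remark that some $p_j$ must have positive degree (otherwise the claim fails for large $r$) is a fair observation that applies equally to the paper's own argument. However, the paper's proof establishes something strictly stronger, and that stronger form is what is actually invoked later: in the proof of Proposition \ref{prop: aux sph f} the lemma is cited to produce $x_0$ with $r \leq \min_j |\rho_j(x_0)|$, i.e.\ \emph{all} of $|p_1(x_0)|,\dots,|p_\ell(x_0)|$ are simultaneously at least $r$. This is needed so that, for every multi-index $J$, one can bound $|t_1|^{j_1}\cdots|t_\ell|^{j_\ell}$ by $|\rho_1(x_0)|^{j_1}\cdots|\rho_\ell(x_0)|^{j_\ell}$ factor by factor as in \eqref{eq: replace t by rho(x0)2}; a lower bound on the maximum alone controls only one factor.

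Your reduction to a single polynomial discards exactly the one non-trivial idea in the paper's proof: since each $p_j$ is non-null, the product $p_1\cdots p_\ell$ is non-null (the zero set of each $p_j$ has Lebesgue measure zero, or equivalently the polynomial ring is an integral domain), so there is a \emph{single} unit vector $y_0$ at which every $p_j(y_0)\neq 0$. Homogeneity of positive degree then lets one take $x_0=sy_0$ with $s$ large to make all the $|p_j(x_0)|$ large simultaneously. In short: your proof is correct for the literal ``max'' statement, but to serve its purpose in the paper the lemma must be read and proved with ``min'' in place of ``max'', and your argument does not yield that; the missing ingredient is the common non-vanishing point supplied by the non-vanishing of the product.
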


\begin{proof}
It is clear that the results holds for $\ell=1$.
Let $p_1,p_2$ be two homogeneous polynomials. Since $p_{1}(x) p_{2}(x) =0$ only for a
set of zero measure in $\mathbb{R}^{n}$, the product $p_{1}p_{2}$ $\neq 0,$
thus there exists $y_{0}\in\mathbb R^n$, $\left\Vert y_{0}\right\Vert =1$, such that $%
p_{1}( y_{0}) \neq 0$, $p_{2}( y_{0}) \neq 0$. As $p_1,p_2$ are homogeneous polynomials, there exists $m_1,m_2\in \mathbb N$ such that $p_{1}(sy_0) =s^{m_1}p_{1}(y_0)$, $p_{2}(sy_0) =s^{m_2}p_{2}(y_0)$ for all $s\in\mathbb R$. 
Given $r>0$, by choosing $s>0$ sufficiently large, we obtain that for $x_0=sy_0$
$$|p_1(x_0)|\geq r, \quad |p_2(x_0)|\geq r.$$
 This  argument is analogous for any finite set of homogeneous polynomials $p_{j}$'s.
\end{proof}

We proceed to prove Proposition \ref{prop: aux sph f}.

\begin{proof}[Proof of Proposition \ref{prop: aux sph f}]

Let $\varphi _{\xi }$ be a bounded spherical function associated to the Gelfand pair $(K,\mathbb R^n)$ and consider the  $h_\xi$ as in Definition \ref{def: h_xi}.

First, notice that from \eqref{eq: series varpi_xi}, we have the relation $\varphi_\xi(x)=h_\xi(\rho(x))$,  $\forall x\in \mathbb R^n$. 
In the remainder of the proof our goal is to prove not only that $h_\xi$ is a well-defined function, but also that it is very regular. In fact, we will show that it is a real analytic function on $\mathbb R^\ell$, in particular $h_\xi\in C^{\infty}(\mathbb{R}^\ell)$. 

By Lemma \ref{lem: pol}, given a positive number $r$, there exists $x_{0}\in \mathbb{R}%
^{n}$ such that $r\leq \min \left\{ \left\vert \rho _{j}\left( x_{0}\right)
\right\vert \right\} $. Then, for $\| t\| \leq r$,
\begin{align}\label{eq: replace t by rho(x0)2}
&\sum_{J}|q_J(\rho(\xi))|\left\vert t_{1}\right\vert
{}^{j_{1}}\cdot...\cdot\left\vert t_{\ell}\right\vert {}^{j_{\ell}} \leq \sum_{J}|q_J(\rho(\xi))|\left\vert \rho _{1}\left( x_{0}\right) \right\vert
^{j_{1}}\cdot...\cdot\left\vert \rho _{\ell}\left( x_{0}\right) \right\vert ^{j_{\ell}} .   
\end{align}
Since the power series \eqref{eq: series varpi_xi} of $\varphi_\xi$ converges absolutely, then the right hand  side of \eqref{eq: replace t by rho(x0)2} converges. Therefore, the series \eqref{eq: h_xi} converges absolutely and
uniformly for $t$ in  the ball of $ \mathbb{R}^\ell$ of radius $r$ centered at the origin. Since $r>0$ is arbitrary, the series converges uniformly over compact sets of $\mathbb R^\ell$. Thus, $h_\xi$ is real analytic on $\mathbb R^\ell$.
\end{proof}

Finally, as a consequence of the symmetry property \eqref{eq: sym} of the spherical functions, the associated functions defined through the expression \eqref{eq: h_xi} satisfy the following property.

\begin{lemma}\label{lem: h as funct of xi}
    For each fixed $t\in \mathbb R^\ell$, the function $h_\xi(t)$ defined by \eqref{eq: h_xi} is a real analytic as a function on the variable $\xi$.
\end{lemma}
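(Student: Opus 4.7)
The plan is to mimic the proof of Proposition \ref{prop: aux sph f}, interchanging the roles of $\xi$ and $t$ via the symmetry $\varphi_\xi(x)=\varphi_x(\xi)$. First, for fixed $t\in\mathbb R^\ell$, each term $q_J(\rho(\xi))\,t^J$ in the series \eqref{eq: h_xi} is already a polynomial in $\xi$ (since $q_J\circ\rho$ is a polynomial composition), hence real analytic. What must be shown is that the series enjoys a sufficiently well-behaved convergence in $\xi$ for the limit to inherit real analyticity.

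For a compact set $C\subset\mathbb R^n$ I would apply Lemma \ref{lem: pol} to choose $x_0\in\mathbb R^n$ with $|\rho_j(x_0)|\geq|t_j|$ for every $j$, yielding the termwise majorization
\[
\sum_{J} |q_J(\rho(\xi))|\, |t|^J \;\leq\; \sum_{J} |q_J(\rho(\xi))|\, |\rho(x_0)|^J \qquad (\xi\in C).
\]
The right-hand side is the absolute version of $\varphi_\xi(x_0)=\sum_J q_J(\rho(\xi))\,\rho(x_0)^J$, which the paragraph following \eqref{eq: series varpi_xi} asserts to converge absolutely and uniformly in $\xi$ on compact sets (for fixed $x_0$), as a consequence of the symmetry $\varphi_\xi(x_0)=\varphi_{x_0}(\xi)$. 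Hence $\sum_J q_J(\rho(\xi))\,t^J$ converges absolutely and uniformly on $C$, so $\xi\mapsto h_\xi(t)$ is a continuous function obtained as a uniform limit of polynomials in $\xi$.

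To upgrade uniform convergence of polynomials into real analyticity of the limit, I would extend $\xi$ to complex values in a neighborhood of $\mathbb R^n$ inside $\mathbb C^n$: the same majorization remains valid on compact subsets of $\mathbb C^n$ because $\rho_j$ extends polynomially to $\mathbb C^n$, so the series is a locally uniform limit of holomorphic polynomials on $\mathbb C^n$, and therefore holomorphic by Weierstrass. Restricting to $\mathbb R^n$ yields real analyticity of $\xi\mapsto h_\xi(t)$. The main obstacle I anticipate is exactly this complex-extension step: one must verify that the analogue of Lemma \ref{lem: pol} and the bound for $\sum_J|q_J(\rho(\xi))|\,|\rho(x_0)|^J$ transfer to compact subsets of $\mathbb C^n$. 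The cleanest way to handle it is to use that $\varphi_\xi(x_0)=\int_K e^{i\langle x_0,k\xi\rangle}\,dk$ is jointly entire in $(\xi,x_0)$ with the uniform bound $|\varphi_\xi(x_0)|\le e^{|x_0||\xi|}$, so Cauchy-type estimates produce polynomial-growth bounds on the coefficients $q_J(\rho(\xi))$ for complex $\xi$ that are compatible with the real-case argument of Section \ref{sec: sph}.
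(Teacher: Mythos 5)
Your core argument coincides with the paper's: both fix $t$, invoke Lemma \ref{lem: pol} to produce $x_0$ with $|\rho_j(x_0)|\ge |t_j|$ for all $j$, majorize $\sum_J |q_J(\rho(\xi))|\,|t_1|^{j_1}\cdots|t_\ell|^{j_\ell}$ by $\sum_J |q_J(\rho(\xi))|\,|\rho_1(x_0)|^{j_1}\cdots|\rho_\ell(x_0)|^{j_\ell}$, and use the symmetry \eqref{eq: sym} to identify the majorant with the absolutely and locally uniformly convergent series of $\varphi_{x_0}$ in the variable $\xi$. Where you genuinely depart from the paper is the final step. The paper stops at ``the series converges absolutely and uniformly over compacts in $\xi$, therefore $\xi\mapsto h_\xi(t)$ is real analytic,'' which, taken literally, is not a valid inference: the terms $q_J(\rho(\xi))\,t^J$ are polynomials in $\xi$ of unbounded degree rather than monomials, and a locally uniform limit of polynomials on $\mathbb{R}^n$ need not be real analytic (contrast with Proposition \ref{prop: aux sph f}, where the series is a genuine power series in $t$ and the standard Abel-type argument applies). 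Your complexification step --- extending $\xi$ to $\mathbb{C}^n$, checking that the majorization and the convergence of the majorant persist on compact subsets of $\mathbb{C}^n$ via the entire extension of $\varphi_{x_0}$ and the bound $|\varphi_{x_0}(\zeta)|\le e^{\|x_0\|\|\zeta\\|}$, and then invoking Weierstrass before restricting to $\mathbb{R}^n$ --- is precisely what is needed to close this gap, so your argument is more complete than the paper's, at the cost of the complex estimates you correctly flag as the remaining work. A slightly more economical alternative in the same spirit is to observe that each $q_J(\rho(\xi))$ is homogeneous of degree $|J|_{\ell,\rho}$ in $\xi$ (since $a_{J,\xi}$ depends linearly on the $b_{I,\xi}$ with $|I|_n=|J|_{\ell,\rho}$, and $b_{I,\xi}=\frac{1}{I!}\int_K (i\,k\cdot\xi)^I\,dk$ is homogeneous of that degree), so the series is a homogeneous expansion whose absolute local uniform convergence does yield real analyticity after rearrangement into the monomial Taylor series.
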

\begin{proof}
    The proof follows similar arguments as in Proposition \ref{prop: aux sph f}, swapping the roles of $t$ and $\xi$ and using the symmetry \eqref{eq: sym} of the spherical functions. 
    Fixed $t\in \mathbb R^\ell$, there exists  $x_{0}\in \mathbb{R}^{n}$ such that 
\begin{align}\label{eq: replace t by rho(x0)}
&\sum_{J}|q_J(\rho(\xi))|\left\vert t_{1}\right\vert
{}^{j_{1}}\cdot...\cdot\left\vert t_{\ell}\right\vert {}^{j_{\ell}} \leq \sum_{J}|q_J(\rho(\xi))|\left\vert \rho _{1}\left( x_{0}\right) \right\vert
^{j_{1}}\cdot...\cdot\left\vert \rho _{\ell}\left( x_{0}\right) \right\vert ^{j_{\ell}} .   
\end{align}
Using the symmetry \eqref{eq: sym}, we can say that the power series \eqref{eq: series varpi_xi} of $\varphi_{x_0}$ as a function of $\xi$, converges absolutely and uniformly over compacts. Therefore, the right hand  side of \eqref{eq: replace t by rho(x0)} converges, because it is bounded above by such absolutely convergent series, and also uniformly over compacts with respect of the variable $\xi$. Therefore, due to the fact that the series \eqref{eq: h_xi} defining $h_\xi(t)$ converges absolutely and
uniformly over compacts with respect of the variable $\xi$ (for fixed $t$), the function  $\xi\mapsto h_\xi(t)$ is a real analytic.

\end{proof}

Finally, notice that following  relations hold true:
$$h_\xi(\rho(x))=\varphi_\xi(x)=\varphi_x(\xi)=h_x({\rho(\xi)}).$$

\subsection{Main Contributions: The Proofs of Theorem \ref{thm: schwarz2} and Corollary \ref{eq: nuestro thm}}\label{sec: main}

In this section, we prove a version of Gerald Schwarz's theorem, stated as Theorem \ref{thm: schwarz}, using only elements from Gelfand theory in Euclidean spaces, which, in our view, is more elementary than the original proof of the theorem.

\begin{definition}\label{def: h}
      Let $(K,\mathbb R^n)$ be a Gelfand pair. For a given $f\in L^1(\mathbb R^n )$, $K$-invariant with $\widehat{f}$ of compact support, we define 
          \begin{equation}\label{eq: h}
        \textcolor{blue}{h(t)} :=\int_{\mathbb R^n} \widehat{f}(\xi)\,  h_{\xi }(t) \, d\xi \qquad \text{for } t=(t_1,\dots,t_\ell)\in \mathbb R^\ell,
        \end{equation}
        where $h_\xi$ is given by \eqref{eq: h_xi}. 
\end{definition}

\begin{proof}[Proof of Theorem \ref{thm: schwarz2}]

Let us consider $f\in L^1(\mathbb R^n )$, $K$-invariant with $\widehat{f}$ of compact support.  
In particular, it holds that $f\in C^\infty(\mathbb R^n)$. 
For such function $f$, consider $h$ as in \eqref{eq: h}.

Let us show that the function $h$ is well-defined.
For each fixed $t$, from Lemma \ref{lem: h as funct of xi}, $\xi\mapsto h_{\xi}(t)$ is continuous, and  thus for $\xi$ in a compact set we have that $h_{\xi}(t)$ is bounded by some constant $C(t)$. Therefore,
\begin{align*}
  \int_{\mathbb R^n} \left\vert \widehat{f}(\xi) 
\, h_{\xi }\left( t\right) \right\vert d\xi&= \int_{sup
p\left( \widehat{f} \, \right) }\left\vert \widehat{f}(\xi) \, 
h_{\xi }\left( t\right) \right\vert d\xi\leq 
\left( \int_{sup
p\left( \widehat{f} \, \right)  } \left\vert 
\widehat{f}(\xi)  \right\vert d\xi \right) C(t)<\infty.
\end{align*}
As a result, $h(t)$ is  well-defined for every $t\in \mathbb R^\ell$.

Now, we will show that $h$ is real analytic. 
In order to see that, we will first check that it is a continuous
function in $\mathbb R^\ell$.
Consider an arbitrary convergent sequence $(t^k)_{k\in \mathbb N}$ in $\mathbb R^\ell$ such that $t^k\to t^*$ for some $t^*\in \mathbb R^\ell$ as $k\to \infty$.
 By Proposition \ref{prop: aux sph f}, for each $\xi$, we know that the function $t\mapsto h_{\xi}(t)$ is continuous in $\mathbb R^\ell$.
Then, we can apply Lebesgue Dominated Convergence Theorem:
\begin{align*}
    \lim_{k\to\infty}h(t^k)&=\lim_{k\to\infty}\int_{sup
p\left( \widehat{f} \, \right) } \widehat{f}(\xi) \, h_\xi(t^k) \, d\xi\\
    &=\int_{sup
p\left( \widehat{f} \, \right) }\widehat{f}(\xi) \, \lim_{k\to\infty}h_\xi(t^k) \, d\xi\\
    &=\int_{sup
p\left( \widehat{f} \, \right) }\widehat{f}(\xi) \, h_\xi(t^*) \, d\xi\\
    &=h(t^*).
\end{align*}

Finally, we recall that a function in $\mathbb C^\ell$ is analytic if it is analytic on each variable.
Let $\gamma$ be closed piecewise-smooth curve 
in $\mathbb C$, where we understand $\mathbb C$ as the first component of $\mathbb C^\ell=\mathbb C\times \dots \times \mathbb C$. Then,  
since $h_\xi(z)$ is analytic, we have
\begin{equation*}
\oint_\gamma h_\xi(z_1,\dots,z_\ell) \, d\gamma(z_1)=0.
\end{equation*}
Then,
\begin{align*}
\oint_\gamma h(z) \, d\gamma(z_1) &= \oint_\gamma \int_{\mathbb R^n} \widehat{f}(\xi) \,  h_{\xi }(z) \, d\xi \,  d\gamma(z_1)\\
&= \int_{sup
p\left( \widehat{f} \, \right) } \widehat{f}(\xi) \underbrace{\oint_\gamma h_{\xi }\left(
z\right) \, d\gamma(z_1) }_{=0}d\xi =0 .
\end{align*}
We were able to interchange the order of integration because we are integrating over compact sets, and so theorems of Fubini-Tonelli hold.
Therefore, by Morera's Theorem, $h$ is analytic on $z_1$. Since the same argument is valid for all variables, it holds that $h$ is analytic as a function in $\mathbb C^\ell$.
Thus, $h$ is not only in $ C^\infty(\mathbb{R}^\ell)$, but also real analytic. 

Finally, under the hypothesis of this theorem we can apply the expression 
\eqref{eq: inv formula with xi} for the inversion formula and the equality \eqref{eq: geltansd and fourier} to obtain 
$f(x)=h(\rho(x))$. Indeed, for every $x\in \mathbb R^n$ yields 
\begin{align*}
    f(x)
    &=\int_{\mathbb{R}^n}\mathcal{F}(f)(\varphi_\xi) \, \varphi_\xi(x) \, d\xi\\
    &= \int_{\mathbb R^n} \mathcal F({f})( \varphi_\xi ) \, h_{\xi }\left(\rho(x)\right) d\xi\\
    &= \int_{\mathbb R^n} \widehat{f}(\xi) \, h_{\xi }\left(\rho(x)\right) d\xi\\
    &=h(\rho(x)), 
\end{align*}
 
where the second equality holds from Proposition \ref{prop: aux sph f}.
\end{proof}

\begin{proof}[Proof of Corollary \ref{eq: nuestro thm}]
    Consider $f=\widehat{g}$ and apply Theorem \ref{thm: schwarz2}. 
    Thus, if we consider
    \begin{align*}
        h(t)=\int_{\mathbb R^n} {\widehat{f}}(\xi) \, h_\xi(t) \, d\xi
        =\int_{\mathbb R^n} \widehat{\widehat{g}}(\xi) \, h_\xi(t) \, d\xi=\int_{\mathbb{R}^n}g(-\xi) \, h_\xi(x) \, d\xi,
    \end{align*}
    it satisfies 
    \begin{equation*}
        \mathcal F({g})(\varphi_\xi)=\widehat{g}(\xi)=f(\xi)=h(\rho(\xi)).
    \end{equation*}
    
\end{proof}

\section{Examples}\label{sec: ex}

The goal of this section is to provide some more explicit flavor on the computation of the coefficients $a_{J,\xi}$ in \eqref{eq: pol}, expressed in terms of a (non-explicit) polynomial $q_J$ in the variables $\rho_1(\xi),\dots, \rho_\ell(\xi)$ as in \eqref{eq: a_I as pol in rho}. 

It is easy to check that 
\begin{equation}\label{eq: dist}
\text{if } \quad  |J|_{\ell,\rho}\not =|J'|_{\ell,\rho}, \quad \text{ then} \quad  (D^{J}\varrho^{J'})(0)=0.     
\end{equation}
However, it is difficult to compute $(D^{J}\varrho^{J'})(0)$ for arbitrary $\ell$ dimensional multi-indexes $J,J'$.

\begin{remark}\label{rem: special case}
Let us denote
    $\textcolor{blue}{\lambda(\xi)}:=\Pi_{k=1}^\ell \lambda_k(\xi)$,
and so we write $\lambda(\xi)^{J}:=\Pi_{k=1}^\ell \lambda_k(\xi)^{j_k}$ for $J=(j_1,\dots, j_\ell)$.
Under the assumption 
\begin{equation}\label{eq: special assuption}
 (D^{J}\varrho^{J'})(0)=
     0,  \qquad \text{ if } \quad J\not\equiv J',
\end{equation}
then we would have the following expressions for the coefficients in \eqref{eq: spherical function as a series}: 
\begin{equation}\label{eq: a}
    a_{J,\xi}=\frac{(D^{J}\varphi_\xi)(0)}{(D^{J}\varrho^{J})(0)}=\frac{\lambda(\xi)^{J}}{(D^{J}\varrho^{J})(0)}\underbrace{\varphi_\xi(0)}_{=1}=\frac{\lambda(\xi)^{J}}{(D^{J}\varrho^{J})(0)}.
\end{equation}
As a result, combining this with \eqref{eq: eigval} in \eqref{eq: spherical function as a series}, we would obtain 
\begin{align}\label{eq: series varpi_xi 2}
\varphi _{\xi }(x) &=\sum_{m=0}^\infty\sum_{{J_m: \, |J_m|_{\ell,\rho}=m}}a_{J_m,\xi} \, \varrho (x) ^{J_m}\notag \\
&=\sum_{m=0}^\infty\sum_{{J_m: \, |J_m|_{\ell,\rho}=m}} \frac{\lambda(\xi)^{J_m}}{(D^{J_m}\varrho^{J_m})(0)} \, \varrho (x)^{J_m}\notag\\
&=\sum_{m=0}^\infty\sum_{{J_m: \, |J_m|_{\ell,\rho}=m}}\frac{i^{m}\varrho(\xi)^{J_m}
}{(D^{J_m}\varrho^{J_m})(0)} \, \varrho(x)^{J_m} 
\end{align}
This expression is very explicit and only requires the computation of the coefficients $(D^J\varrho)(0)$. However, the assumption \eqref{eq: special assuption} does not hold in general.
\end{remark}

When the algebra $\mathcal{P}$ has a single generator,  the relation \eqref{eq: special assuption} holds and we have the nice expression of the spherical functions as in \eqref{eq: series varpi_xi 2}. We will see this in the first examples.

\begin{example}
    Consider the classical theory on Fourier Analysis on the real line $\mathbb{R}$. 
    Here, $K$ is trivial, and we can consider $\rho(x):=x$ as a single generator of the algebra of polynomials $\mathcal{P}(\mathbb{R})$, and the classical derivative operator in one-dimension, $\partial_x$, as a generator of $\mathbb D(\mathbb R)$.
    The bounded spherical functions are the complex exponential functions 
$\varphi_\xi(x)=e^{i\xi x}$ parametrized by $\xi\in \mathbb R$. By using their Taylor expansions, we obtain an expression as in \eqref{eq: series varpi_xi 2}:
\begin{align*}
\varphi_\xi(x)&=e^{i\xi x}=\sum_{k=0}^\infty(\partial_{x}^{k}\varphi_\xi)(0) \, \frac{ x^k}
{k!}=\sum_{k=0}^\infty \; \underbrace{\frac{(i\xi)^k}{k!}}_{a_{k,\xi}}  \, \underbrace{x^k}_{\rho(x)^k}= \sum_{k=0}^\infty \; {\frac{i^k \rho(\xi)^k}{(\partial_x^k\rho(x)^k)(0)}}  \, \rho(x)^k.
\end{align*}
In terms of the Schwarz Theorem, this is a trivial example, as $\varphi_\xi=h_\xi$, and $f=h$ in Theorems \ref{thm: schwarz} or \ref{thm: schwarz2}. 
\end{example}

\begin{example} Let us analyze the Gelfand pair
 $(\mathrm{SO}(n),\mathbb R^n)$, where $K=\mathrm{SO}(n)$ is the special orthogonal 
group. 

On the one hand, 
the algebra of rotational invariant polynomials, $\mathcal{P}(\mathbb R^n)^{\mathrm{SO}(n)}$, is generated by a single polynomial, which can be chosen as  $\rho(x):=\|x\|^2$. Similarly, $\mathbb D(\mathbb R^n)^{\mathrm{SO}(n)}$ is generated by the Laplacian operator $\Delta$.
Notice that 
$$\Delta \|x\|^{2m}=2m(2(m-1)+n)\|x\|^{2(m-1)}, \qquad \qquad \forall m\in \mathbb N$$
and, more generally,
\begin{align*}
  \Delta^{k} \|x\|^{2m}&=2m(2(m-1)+n)\cdot...\cdot(2(m-k+1))(2(m-k)+n)\|x\|^{2(m-k)}, \qquad \forall m,k\in \mathbb N.
\end{align*}
In particular, 
\begin{equation}\label{eq: lo que queremos}
    \left(\Delta^{k} \|x\|^{2m}\right)(0)=0, \qquad \forall k\not=m,
\end{equation}
and 
if $k=m$,
\begin{equation}\label{eq: explicit comp}
\Delta^{k} \|x\|^{2k}= (2k)!!n(n+2)\cdot...\cdot(n+2(k-1)),    
\end{equation}
where $\textcolor{blue}{2k!!}:=2k\cdot 2(k-2)\cdot2(k-4)\cdot...\cdot 4\cdot 2 $. Thus, in this case, we are under the hypothesis \eqref{eq: special assuption}.

On the other hand,  the algebra $L^1(\mathbb{R}^n)^{\mathrm{SO}(n)}$ is that of all rotational invariant functions in $L^1(\mathbb{R}^n)$, that is, the algebra of integrable radial functions. Its spectrum $\Lambda$ can be identified with the half-line $\{\|\xi\|e_1: \, s\in \mathbb R_{\geq 0}\}\subset \mathbb{R}^n$, for $e_1=(1,0,...,0)\in\mathbb R^n$, 
as we have that for every $\xi\in \mathbb{R}^n$ the bounded spherical functions $\varphi_\xi$ can be obtained as 
$$\varphi_{\xi}(x)=\int_{\mathrm{SO}(n)}e^{i\langle x,k\cdot \xi\rangle} \, dk=\int_{\mathbb{S}^{n-1}}e^{i\|\xi\|\langle x,\theta\rangle} \, d\theta,$$ where $d\theta$ is the uniform probability measure on the sphere $\mathbb{S}^{n-1}$. We have that, $\varphi_{\xi_1}=\varphi_{\xi_2}$ as long as $\|\xi_1\|=\|\xi_2\|$. 
It holds that 
\begin{equation}\label{eq: explicit eigvenval}
    ({\Delta^{k}\varphi_\xi)(0)}={(i\|\xi\|)^{2k}}=i^{2k}\rho(\xi)^k.
\end{equation}
(Notice that $|k|_{1,\rho}=2k$.)
Moreover, it is well-known the following series expression for each bounded spherical function (see, for e.g., \cite{Dijk}):
\begin{equation}\label{eq: sph son}
    \varphi_\xi(x)=\sum_{k}\underbrace{\frac{\Gamma(2/n)}{k!\Gamma(k+n/2)}\frac{(i\|\xi\|)^{2k}}{2^{2k}}}_{a_{k,\xi}} \, \underbrace{\|x\|^{2k}}_{\rho(x)^k}.
\end{equation}
Thus, we already have an explicit expression for the coefficients $a_{k,\xi}$. 
The goal now is to show that the formula \eqref{eq: sph son} from the literature coincides with an expression like \eqref{eq: series varpi_xi 2}.
Indeed, since we are under hypothesis \eqref{eq: special assuption}, then using \eqref{eq: a} we have
$$a_{k,\xi}=\frac{(\Delta^{k}\varphi_\xi)(0)}{(\Delta^{k}\rho^{k})(0)}=\frac{i^{2k}\|\xi\|^k}{(2k)!!n(n+2)\cdot...\cdot(n+2(k-1))},$$
where we have used \eqref{eq: explicit comp} and \eqref{eq: explicit eigvenval}. 
Therefore, to match with the formula \eqref{eq: sph son},  one needs to verify the identity
\begin{equation}\label{eq: aux1}
    \frac{\Gamma(n/2)}{\Gamma(k+n/2)k!2^{2k}}=\frac{1}{(2k)!!n(n+2)\cdot...\cdot(n+2(k-1))}.
\end{equation}
It indeed holds true because, on the one hand, it is easy to check that $$k!2^k=(2k)!!$$ and,  on the other hand, by applying iteratively the identity  $\Gamma(z+1)=z\Gamma(z)$ for the Gamma function, one can reach
\begin{equation*}
    \Gamma(k+n/2)=\frac{n}{2}(\frac{n}{2}+1)\cdot...\cdot(\frac{n}{2}+(k-1))\Gamma(n/2)=\frac{n(n+2)\cdot ...  \cdot n+2(k-1)}{2^k}\Gamma(n/2).
\end{equation*}

Finally, in this particular example we have the explicit formula
\begin{align*}
    h_\xi(t)=\sum_{k}{\frac{\Gamma(2/n)}{k!\Gamma(k+n/2)}\frac{(i\|\xi\|)^{2k}}{2^{2k}}} \, t^k =\sum_{k}\frac{i^{2k}\|\xi\|^k}{(2k)!!n(n+2)\cdot...\cdot(n+2(k-1))} \, t^k.
\end{align*}

\end{example}

\begin{example}
    Consider the Gelfand pair $(\mathbb Z_2,\mathbb R^2)$. Then, $L^1(\mathbb R^2)^{\mathbb Z_2}$ is the space of integrable even functions. If $\mathcal{P}(\mathbb R^2)=span\{x_1,x_2\}$, then $\mathcal{P}(\mathbb R^2)^{\mathbb Z_2}=span\{x_1^2,x_1x_2,x_2^2\}$. 
    Let us denote, in this case, 
    $$\rho_1(x_1,x_2):=x_1^2 \qquad \rho_2(x_1,x_2):=x_1x_2 \qquad \rho_3(x_1,x_2):=x_2^2.$$
    We view this as an interesting example, since, on the one hand,  $n=2<3=\ell$, that is, roughly speaking, the number of generators of $\mathcal{P}(\mathbb R^2)^{\mathbb Z_2}$ is greater than the number of `variables'. On the other hand, in this example the generators of $\mathcal{P}(\mathbb R^2)^{\mathbb Z_2}$ are not algebraically independent, in fact, they satisfy the relation
    \begin{equation}\label{eq: relation z2}
    (\rho_2)^2=\rho_1\rho_3    .
    \end{equation}
    Let $\varphi_\xi$ be a bounded spherical function of the pair $(\mathbb Z_2, \mathbb R^2)$ written as a power series as in \eqref{eq: spherical function as a series}
    \begin{equation}\label{eq: example eq 1}
        \varphi_\xi(x)=\sum_{J=(j_1,j_2,j_3)}a_{J,\xi} \, \rho_1(x)^{j_1}\rho_2(x)^{j_2}\rho_3(x)^{j_3} \qquad \forall x=(x_1,x_2).
    \end{equation}
    In order to get a more explicit expression for the coefficients of the series above, we proceed as follows.
    \begin{enumerate}
        \item First, let us make the choice that we replace every even power of $\rho_2$, i.e., every $\rho_2(x)^{2k}$ for $k\in \mathbb N$, by the same factor but written in terms of $\rho_1$ and $\rho_3$, i.e., $\rho_1(x)^k\rho_3(x)^k$. After this, we get a series expression for \eqref{eq: example eq 1} in terms of the generators $\rho_1,\rho_2,\rho_3$, but `clean' of relations among them. Precisely, given a multi-index 
    $J=(j_1,j_2,j_3)$, we write $j_2=2k_J+\omega_J$ for unique $\omega_J\in \{0,1\}$ and $k_J\in \mathbb N$, and consider a second multi-index $I:=(i_1,i_2,i_3)$ such that
    \begin{equation}
        i_1:=k_J+j_1, \qquad  i_2:=\omega_J, \qquad i_3:=k_J+j_3 .
    \end{equation} 
    Then, $\rho_1(x)^{j_1}\rho_2(x)^{j_2}\rho_3(x)^{j_3}=\rho_1(x)^{i_1}\rho_2(x)^{i_1}\rho_1(x)^{i_1}$ but we only write the right-hand side expression.

    \item Secondly, by taking common factors, the terms  in \eqref{eq: example eq 1} associated to the monomials $\rho(x)^J$ and $\rho(x)^I$ are combined in only one term
    $\left(a_{J,\xi}+a_{I,\xi}\right)\rho(x)^{I}$. That is, we can rewrite \eqref{eq: example eq 1} as
    \begin{equation}\label{eq: example eq 2}
        \varphi_\xi(x)=\sum_{\substack{
   I=(u+k,\omega,v+k)\\ \text{such that}\\
   \omega \in \{0,1\}, \,  
   u,v,k\in \mathbb N}}\left(a_{(u,2k+\omega,v),\xi}+a_{(u+k,\omega,v+k),\xi}\right)\, \underbrace{\rho_1(x)^{u+k}\rho_2(x)^{\omega}\rho_3(x)^{u+k}}_{\rho(x)^I} \quad \forall x=(x_1,x_2),
    \end{equation}
    where $\xi=(\xi,\xi_2)\in \mathbb R^2$.
    \item Now, consider two arbitrary multi-indexes $I=(i_1,i_2,i_3)$, $I'= (i_1',i_2',i_3')$ that appear in \eqref{eq: example eq 2}, that is, with $i_2,i_2'\in \{0,1\}$. We will show that 
    \begin{equation}
        (D^I\rho^{I'})(0)\not=0 \qquad \text{if and only if} \qquad I\equiv I'.
    \end{equation}
    Indeed, the expression
    \begin{align}\label{eq: is not 0}
        (D^I\rho^{I'})(0)=
    \left(\partial_{x_1}^{i_1+i_2} \, \partial_{x_2}^{i_3+i_2} \, (x_1)^{i_1'+i_2'}(x_2)^{i_3'+i_2'}\right)(0)
    \end{align}
    is not null if and only if 
    \begin{equation}\label{eq: parity}
        i_1+i_2=i_1'+i_2' \qquad \text{and} \qquad i_3+i_2=i_3'+i_2'.
    \end{equation}
    
     It is trivial to see that if $I\equiv I'$, then $(D^I\rho^{I'})(0)\not=0$. 
     Thus, only the other direction is left to prove. Arguing by contrapositive, let us assume $I\not\equiv I'$, and let us show that in such case $(D^I\rho^{I'})(0)=0$.       
    Since  $\mathrm{deg}(\rho_k)=2$ for every $k=1,2,3$, from \eqref{eq: dist} we have that if $i_1+i_2+i_3\not=i_1'+i_2'+i_3'$, then $(D^I\rho^{I'})(0)=0$. As a result, we can suppose that $i_1+i_2+i_3=i_1'+i_2'+i_3'$. Let us separate in two cases.    
    First, if $i_2=i_2'$, since $I\not\equiv I'$, then $i_1\not=i_1'$ or $i_3\not=i_3'$, and in either case \eqref{eq: is not 0} is not null. Second, if $i_2\not=i_2'$, since their only two possible values are $0$ and $1$, we have that \eqref{eq: parity} holds if and only if $i_1$ and $i_1'$ have different parity, as well as $i_3$ and $i_3'$ (i.e., one is even and the other is odd). However, if so it contradicts the equality $i_1+i_2+i_3=i_1'+i_2'+i_3'$ as we fall under one of the following three cases:
\begin{table}[h]
\centering
\renewcommand{\arraystretch}{1} 
\begin{tabular}{c|c|c||c|c|c}
\, $i_1$ \, &  $i_2$ \,  &  $i_3$  & \, $i_1'$ \,  &  \, $i_2'$\,  & $i_3'$  \\
\hline
\multicolumn{3}{c}{$\underbrace{\text{odd} + 0 + \text{odd}}_{\text{even}}$} &
\multicolumn{3}{c}{$\underbrace{\text{even} + 1 + \text{even}}_{\text{odd}}$} \\
\multicolumn{3}{c}{$\underbrace{\text{even} + 0 + \text{even}}_{\text{even}}$} &
\multicolumn{3}{c}{$\underbrace{\text{odd} + 1 + \text{odd}}_{\text{odd}}$} \\
\multicolumn{3}{c}{$\underbrace{\text{odd} + 0 + \text{even}}_{\text{odd}}$} &
\multicolumn{3}{c}{$\underbrace{\text{even} + 1 + \text{odd}}_{\text{even}}$} \\
\end{tabular}
\end{table}

Therefore, \eqref{eq: parity} is not satisfied and so $(D^I\rho^{I'})(0)=0$.

\item Finally, from \eqref{eq: example eq 2} we have that for every multi-index $I=(u+k,\omega,v+k)$, with 
   $\omega \in \{0,1\}$, $u,v,k\in \mathbb N$
\begin{align*}
    \lambda(\xi)^I=D^I\varphi_\xi(0)=\left(a_{(u,2k+\omega,v),\xi}+a_{(u+k,\omega,v+k),\xi}\right)(D^I\rho^I)(0).
\end{align*}
Thus, using \eqref{eq: eigval},
$$\left(a_{(u,2k+\omega,v),\xi}+a_{(u+k,\omega,v+k),\xi}\right)=\frac{i^{2(u+v+2k+\omega)}\rho^I(\xi)}{(D^I\rho^I)(0)}=\frac{-\rho_1(\xi)^{u+k}\rho_2(\xi)^{\omega}\rho_3(\xi)^{v+k}}{(u+k+\omega)!(v+k+\omega)!}.$$
    \end{enumerate}
    
  Hence,
    \begin{equation*}\label{eq: example eq 3}
        \varphi_\xi(x)=\sum_{\substack{
   I=(u+k,\omega,v+k)\\ \text{such that}\\
   \omega \in \{0,1\}, \,  
   u,v,k\in \mathbb N}}\frac{-\overbrace{\rho_1(\xi)^{u+k}\rho_2(\xi)^{\omega}\rho_3(\xi)^{v+k}}^{\rho(\xi)^I}}{(u+k+\omega)!(v+k+\omega)!}\, \rho_1(x)^{u+k}\rho_2(x)^{\omega}\rho_3(x)^{u+k},
    \end{equation*}
  
  and 
        \begin{equation*}\label{eq: example eq 4}
        h_\xi(t_1,t_2,t_3)=\sum_{\substack{
   I=(u+k,\omega,v+k)\\ \text{such that}\\
   \omega \in \{0,1\}, \,  
   u,v,k\in \mathbb N}}\frac{-\rho(\xi)^I}{(u+k+\omega)!(v+k+\omega)!} \, t_1^{u+k} \, t_2^\omega  \, t_3^{v+k}.
    \end{equation*}

\end{example}

\subsection*{Acknowledgment} The authors thank Leandro Cagliero for insightful conversations.

\bibliographystyle{plain}

\noindent\textbf{\normalsize{Rocío Díaz Martín}}\\
\small{Department of Mathematics, Florida State University, Tallahassee, FL 32306, USA}\\
\texttt{rdiazmartin@fsu.edu}

\medskip

\noindent\textbf{\normalsize{Linda Saal}}\\
\small{CIEM -- CONICET, X5000 Córdoba, Argentina}\\
\texttt{linda.saal@gmail.com}

\end{document}